\numberwithin{equation}{section}
\newtheorem{theorem}{Theorem}[section]
\newtheorem{lemma}[theorem]{Lemma}
\newtheorem{proposition}[theorem]{Proposition}
\newtheorem{corollary}[theorem]{Corollary}
\theoremstyle{definition}
\newtheorem{definition}[theorem]{Definition}
\newtheorem{conjecture}[theorem]{Conjecture}
\newtheorem{def-prop}[theorem]{Definition-Proposition}
\newtheorem{remark}[theorem]{Remark}
\newtheorem{example}[theorem]{Example}
\newtheorem*{acknowledgement}{Acknowledgements}
\DeclareMathOperator{\reg}{reg}
\DeclareMathOperator{\Ass}{Ass}
\renewcommand{\AA}{{\mathbb A}}
\newcommand{\CC}{{\mathbb C}}
\newcommand{\PP}{{\mathbb P}}
\newcommand{\ZZ}{{\mathbb Z}}
\newcommand{\NN}{{\mathbb N}}
\newcommand{\QQ}{{\mathbb Q}}
\newcommand{\XX}{{\mathbb X}}
\newcommand{\kk}{{\mathbbm k}}
\def\mm{{\frak m}}
\def\pp{{\frak p}}
\def\a{{\bf a}}
\def\x{{\bf x}}
\def\z{{\bf z}}
\def\ahat{\widehat{\alpha}}
\def\1{{\bf 1}}
\def\0{{\bf 0}}
\begin{document}
	
\title{Chudnovsky's Conjecture and the stable Harbourne--Huneke containment}

\author{Sankhaneel Bisui}
\address{Tulane University \\ Department of Mathematics \\
	6823 St. Charles Ave. \\ New Orleans, LA 70118, USA}
\email{sbisui@tulane.edu}
%\urladdr{http://www.math.tulane.edu/$\sim$tai/}

\author{Elo\'isa Grifo}
\address{University of California at Riverside \\ Department of Mathematics \\
    900 University Ave. \\ Riverside, CA 92521, USA}
\email{eloisa.grifo@ucr.edu}

\author{Huy T\`ai H\`a}
\address{Tulane University \\ Department of Mathematics \\
	6823 St. Charles Ave. \\ New Orleans, LA 70118, USA}
\email{tha@tulane.edu}
%\urladdr{???}

\author{Th\'ai Th\`anh Nguy$\tilde{\text{\^e}}$n}
\address{Tulane University \\ Department of Mathematics \\
	6823 St. Charles Ave. \\ New Orleans, LA 70118, USA}
\email{tnguyen11@tulane.edu}
%\urladdr{???}

\keywords{Chudnovsky's conjecture, Waldschmidt constant, ideals of points, symbolic powers, containment problem, Stable Harbourne Conjecture}
\subjclass[2010]{14N20, 13F20, 14C20}

\begin{abstract}
We investigate containment statements between symbolic and ordinary powers and bounds on the Waldschmidt constant of defining ideals of points in projective spaces.
We establish the stable Harbourne conjecture for the defining ideal of a general set of points. We also prove Chudnovsky's Conjecture and the stable version of the Harbourne--Huneke containment conjectures for a general set of sufficiently many points.
\end{abstract}

\maketitle

%%%%%%%%%%%%%%%%%%%%%%%%%%%%%%%%%%%%%%%%%%%%%%%%%%%%

\section{Introduction} \label{sec.intro}

Chudnovsky's Conjecture suggests a lower bound for the answer to the following fundamental question: \emph{given a set of distinct points $\XX \subseteq \PP^N_\kk$, where $\kk$ is a field, and an integer $m \geqslant 1$, what is the least degree of a homogeneous polynomial in the homogeneous coordinate ring $\kk[\PP^N_\kk]$ that vanishes at each point in $\XX$ of order at least $m$}? For a homogeneous ideal $I \subseteq \kk[\PP^N_\kk]$, let $\alpha(I)$ be the least generating degree of $I$ and let $I^{(m)}$ denote its $m$-th symbolic power. By the Zariski--Nagata Theorem \cite{Zariski,Nagata,EisenbudHochster} (cf. \cite[Proposition 2.14]{SurveySP}), when $\kk$ is perfect, the answer is $\alpha ( I_\XX^{(m)} )$, where $I_\XX \subseteq \kk[\PP^N_\kk]$ is the defining ideal of $\XX$. When $\kk = \CC$, the following is equivalent to Chudnovsky's Conjecture \cite{Chudnovsky1981}.

\begin{conjecture}[Chudnovsky] \label{conj.Chud}
Let $I \subseteq \kk[\PP^N_\kk]$ be the defining ideal of a set of points in $\PP^N_\kk$. For any $m \in \NN$,
$$\dfrac{\alpha(I^{(m)})}{m} \geqslant \dfrac{\alpha(I)+N-1}{N}.$$
\end{conjecture}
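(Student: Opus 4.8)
The plan is to derive Chudnovsky's inequality, for every fixed $m$ simultaneously, from a single asymptotic estimate on the Waldschmidt constant $\widehat\alpha(I) := \lim_{m\to\infty}\alpha(I^{(m)})/m$. Because $I^{(a)}I^{(b)}\subseteq I^{(a+b)}$, the sequence $\big(\alpha(I^{(m)})\big)_m$ is subadditive, so by Fekete's lemma $\widehat\alpha(I)=\inf_{m\geqslant 1}\alpha(I^{(m)})/m$ and in particular $\alpha(I^{(m)})/m\geqslant\widehat\alpha(I)$ for every $m$. Thus it is enough to prove
$$\widehat\alpha(I)\ \geqslant\ \frac{\alpha(I)+N-1}{N}.$$
The advantage is that $\widehat\alpha$ is stable under passing to high symbolic powers, and lower bounds for it follow cleanly from containments between symbolic and ordinary powers of $I$.

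Next I would reduce this Waldschmidt bound to a stable Harbourne--Huneke type containment: for the defining ideal $I$ of \emph{any} finite set of points in $\PP^N$ and all $r\gg 0$,
$$I^{(Nr-N+1)}\ \subseteq\ M^{(N-1)(r-1)}\,I^r,\qquad M=(x_0,\dots,x_N).$$
Granting this, every nonzero element of $M^{(N-1)(r-1)}I^r$ has degree at least $(N-1)(r-1)+r\,\alpha(I)$, so $\alpha(I^{(Nr-N+1)})\geqslant(N-1)(r-1)+r\,\alpha(I)$, and dividing by $Nr-N+1$ and letting $r\to\infty$ along this subsequence gives
$$\widehat\alpha(I)\ =\ \lim_{r\to\infty}\frac{\alpha(I^{(Nr-N+1)})}{Nr-N+1}\ \geqslant\ \lim_{r\to\infty}\frac{(N-1)(r-1)+r\,\alpha(I)}{Nr-N+1}\ =\ \frac{\alpha(I)+N-1}{N},$$
which is exactly the required inequality. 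A containment of the shape $I^{(Nr-N+1)}\subseteq I^r$, combined with the extra order of vanishing the multiplicity-$r$ conditions force at the points, would serve equally well; all one needs is to recover the additive ``$+(N-1)$'' in the numerator, which the uniform containment $I^{(Nr)}\subseteq I^r$ of Ein--Lazarsfeld--Smith and Hochster--Huneke alone does not provide.

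The real obstacle is the third step: proving the stable containment above for an \emph{arbitrary} configuration of points. Each known approach stalls precisely here. Specialization arguments produce the desired containment for a general set of points by degenerating a star-configuration or a monomial model, but $\widehat\alpha$ can strictly \emph{decrease} under specialization, so a statement valid for general points does not descend to an arbitrary one; multiplier-ideal and Skoda-type estimates, which settle $N=2$, are too lossy once $N\geqslant 3$; and differential-operator (Zariski--Nagata) techniques give only the coarse bound $\widehat\alpha(I)\geqslant\alpha(I)/N$. So the route is: (i) reduce Chudnovsky to the Waldschmidt bound by subadditivity; (ii) reduce that bound to the stable containment with the sharp exponents $Nr-N+1$ and $(N-1)(r-1)$; (iii) establish this containment for every point set. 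Steps (i) and (ii) are formal; step (iii), for point sets that are neither generic nor combinatorially special, is where I expect a genuinely new idea to be needed.
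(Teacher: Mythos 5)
The statement you are asked about is Conjecture \ref{conj.Chud} itself, and your write-up is not a proof of it: your steps (i) and (ii) are correct and formal, but your step (iii) --- establishing $I^{(Nr-N+1)} \subseteq \mm^{(N-1)(r-1)}I^r$ (or a comparable containment recovering the ``$+(N-1)$'') for an \emph{arbitrary} finite set of points and $r \gg 0$ --- is exactly the open content of the problem, and you say yourself that a new idea would be needed there. So there is a genuine gap, and it is the whole load-bearing step. For what it is worth, the paper does not close it either: Chudnovsky's inequality is stated as a conjecture, and what is actually proved is the special case of a \emph{general} set of $s \geqslant 4^N$ points (Theorem \ref{thm.Chud}), plus a weaker bound with numerator $\alpha(I)+N-2$ for general sets of any size (Theorem \ref{thm.weakChud}).

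Your reduction does match the paper's mechanism: the inequality $\alpha(I^{(m)})/m \geqslant \ahat(I)$ plus a stable containment of Harbourne--Huneke type is precisely how Theorem \ref{thm.Chud} and Proposition \ref{pro.HaHu} extract the bound $\ahat(I) \geqslant (\alpha(I)+N-1)/N$. Where the paper gets further than your sketch anticipates is in how it obtains the containment for general (not just very general) points: it proves a \emph{single} containment $I(\z)^{(Nc-N)} \subseteq \mm_\z^{c(N-1)}I(\z)^c$ for generic points over $\kk(\z)$ (Lemma \ref{lem.genHaHu}), specializes that one containment via Krull's results to an open dense set of configurations, and then upgrades one containment to the stable family for all $r \gg 0$ using Theorem \ref{thm.14all} / Corollary \ref{cor.stableChud}. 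This bypasses the issue you raise about specialization (that $\ahat$ can drop), because only a fixed containment at one exponent is specialized, not an asymptotic invariant, and it avoids the infinite intersection of open sets that limited earlier work to very general points. None of this, however, touches arbitrary point sets, so as a proof of Conjecture \ref{conj.Chud} your proposal (and the paper) stops short at your step (iii).
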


Chudnovsky himself proved the bound for any set of points in $\mathbb{P}^2_\CC$ (see also \cite{HaHu}). Esnault and Viehweg \cite{EsnaultViehweg} showed moreover that for any set of points in $\mathbb{P}^N_\CC$,
$$\dfrac{\alpha(I^{(m)})}{m} \geqslant \dfrac{\alpha(I)+1}{N}.$$
These results extended the previous bound $N \alpha(I^{(m)}) \geqslant m \alpha(I)$ by Waldschmidt and Skoda \cite{Waldschmidt,Skoda}. Decades later, Ein, Lazarsfeld and Smith \cite{ELS} and Hochster and Huneke \cite{comparison} showed that $I^{(Nm)} \subseteq I^m$ for all $m \in \NN$. More generally, given any radical ideal $I$ of big height $h$ in a regular ring, $I^{(hm)} \subseteq I^m$ for all $m \in \NN$ (see \cite{MaSchwede} for the mixed characteristic case). In particular, this implies the Waldschmidt and Skoda degree bound. The containment result also gives more detailed information about the polynomials that vanish to order $m$ along $\XX$.

In an effort to improve the containment between symbolic and ordinary powers, Harbourne and Huneke \cite{HaHu} conjectured that the defining ideal $I \subseteq \kk[\PP^N_\kk]$ of any set of points in $\PP^N_\kk$ satisfies a stronger containment, namely, $I^{(Nm)} \subseteq \mm^{m(N-1)}I^m$ and $I^{(Nm-N+1)} \subseteq \mm^{(m-1)(N-1)}I^m,$ for all $m \geqslant 1$, 
where $\mm$ denotes the graded irrelevant ideal. 
Chudnovsky's Conjecture, in fact, follows as a corollary to the \emph{stable} version of these conjectures, meaning it is enough to study the case when $m \gg 0$. We will study stable versions of the conjectures of Harbourne and Huneke \cite[Conjectures 2.1 and 4.1]{HaHu}.

\begin{conjecture}[Stable Harbourne--Huneke Containment] \label{conj.HaHu}
	Let $I \subseteq \kk[\PP^N_\kk]$ be a homogeneous radical ideal of big height $h$. Then there exists a constant $r(I) \geqslant 1$, depending on $I$, such that for all $r \geqslant r(I)$, we have
	$$(1) \quad I^{(hr)} \subseteq \mm^{r(h-1)}I^r \qquad \textrm{ and } \qquad (2) \quad I^{(hr-h+1)} \subseteq \mm^{(r-1)(h-1)}I^r.$$
\end{conjecture}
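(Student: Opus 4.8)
The plan is to reduce each of the two containments to a degree inequality for symbolic powers, governed by the Waldschmidt constant $\widehat{\alpha}(I)=\lim_{m\to\infty}\alpha(I^{(m)})/m$, and to separate the ``$\mm$-free'' part of the statement from the insertion of the powers of $\mm$. The $\mm$-free part is exactly the Ein--Lazarsfeld--Smith/Hochster--Huneke containment $I^{(hr)}\subseteq I^r$ (valid for every $r$), together with its stable Harbourne refinement $I^{(hr-h+1)}\subseteq I^r$ for $r\gg 0$. Granting these, the powers of $\mm$ should come essentially for free from a degree count, using the elementary fact that in $R=\kk[\PP^N_\kk]$ every homogeneous element of degree at least $k$ lies in $\mm^k$.

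More precisely, let $\omega(I^r)$ denote the top degree of a minimal homogeneous generator of $I^r$. Given a homogeneous $f\in I^{(hr)}$, the containment $f\in I^r$ lets us write $f=\sum_i g_i m_i$ with $m_1,\dots,m_t$ a minimal homogeneous generating set of $I^r$ and each $g_i$ homogeneous of degree $\deg f-\deg m_i\ge\alpha(I^{(hr)})-\omega(I^r)$. Hence, as soon as $\alpha(I^{(hr)})\ge\omega(I^r)+r(h-1)$, each $g_i$ has degree at least $r(h-1)$, so $g_i\in\mm^{r(h-1)}$ and therefore $f\in\mm^{r(h-1)}I^r$, proving $(1)$. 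The identical argument applied to $f\in I^{(hr-h+1)}\subseteq I^r$ gives $(2)$ once $\alpha(I^{(hr-h+1)})\ge\omega(I^r)+(r-1)(h-1)$.

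It remains to establish these degree inequalities for $r\gg 0$. Since $I^{r+s}=I^rI^s$, the sequence $\omega(I^r)$ is subadditive, so $\rho:=\lim_{r\to\infty}\omega(I^r)/r$ exists and satisfies $\alpha(I)\le\rho\le\omega(I)$. Using $\alpha(I^{(m)})\ge m\,\widehat{\alpha}(I)$ for all $m$ (subadditivity of $m\mapsto\alpha(I^{(m)})$), both inequalities follow for $r\gg 0$ as soon as the $r$-leading coefficients line up, namely as soon as
\[
\widehat{\alpha}(I)\ \ge\ \frac{\rho+h-1}{h}
\]
(the borderline equality case also working when $\omega(I^r)=\rho r$, e.g.\ when $I$ and all its powers are generated in a single degree). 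When $\rho=\alpha(I)$ this is precisely Chudnovsky's bound $\widehat{\alpha}(I)\ge(\alpha(I)+h-1)/h$, and in general it is a Chudnovsky-type bound with $\alpha(I)$ replaced by the asymptotic generating degree $\rho$.

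The main obstacle is this last input. For an arbitrary homogeneous radical ideal of big height $h$, neither a Chudnovsky-type lower bound on $\widehat{\alpha}(I)$ nor control on $\rho$ is known, and moreover the stable Harbourne containment required for $(2)$ is itself open in that generality; consequently the reduction above does not close unconditionally, which is why Conjecture~\ref{conj.HaHu} is stated as a conjecture. The strategy does go through, however, for $I=I_\XX$ the defining ideal of a general set of points in $\PP^N_\kk$ (so $h=N$): there one controls $\rho$ in terms of $\alpha(I_\XX)$ when the number of points is large enough, lower bounds on the Waldschmidt constant of general points of Chudnovsky strength are available, and the stable containment $I_\XX^{(Nr-N+1)}\subseteq I_\XX^r$ for $r\gg 0$ can be established in the same setting; feeding these into the argument above yields both parts of the statement unconditionally for a general set of sufficiently many points. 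That is the route I would take, leaving the general radical-ideal case as the conjecture it is.
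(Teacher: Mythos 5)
You are right that this statement is a conjecture: the paper does not prove it for arbitrary homogeneous radical ideals, and only establishes it for the defining ideal of a general set of sufficiently many points (Theorem \ref{thm.HaHu}). Your mechanism for inserting the powers of $\mm$ --- once $f\in I^{(hr)}\subseteq I^r$, a degree count gives $f\in\mm^{k}I^r$ provided $\alpha(I^{(hr)})\geqslant \omega(I^r)+k$ --- is essentially the mechanism the paper itself uses (via \cite[Lemma 2.3.4]{BoH} and the bound $\omega(I^r)\leqslant\reg(I^r)\leqslant r\reg(I)$), so that part of your reduction matches what happens inside Lemma \ref{lem.genHaHu}.

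However, your claim that the strategy ``goes through'' for a \emph{general} set of sufficiently many points has a genuine gap, and it is exactly the gap the paper's main technical result is designed to close. The Chudnovsky-strength lower bound on the Waldschmidt constant that your degree inequality requires (e.g.\ $\ahat(I_\XX)\geqslant\lfloor\sqrt[N]{s}\rfloor$) is only known for \emph{very general} points; more to the point, verifying your inequality for each large $r$ imposes one open condition per $r$, so running your argument directly yields a statement valid only on $\bigcap_{r}U_r$, i.e.\ for very general points, not general ones. The paper circumvents this by (i) proving the required asymptotic inequalities for the \emph{generic} points over $\kk(\z)$ (Lemma \ref{lem.genHaHu}), (ii) specializing a \emph{single} containment $I^{(Nc-N)}\subseteq\mm^{c(N-1)}I^c$ to one open dense subset of the parameter space, and (iii) bootstrapping that one containment to all $r\gg0$ via Theorem \ref{thm.14all}, whose proof rests on Johnson's theorem $I^{(hk+a_1+\cdots+a_k)}\subseteq I^{(a_1+1)}\cdots I^{(a_k+1)}$. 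This one-containment-to-stable-containment step, combined with specialization, is the paper's novelty and is absent from your proposal; without it your route stalls at ``very general.'' The same issue affects your appeal to the stable containment $I^{(Nr-N+1)}\subseteq I^r$ for general points, which the paper obtains by proving expected resurgence through the same specialize-one-containment device (Theorem \ref{thm.exp.resurgence.general}).
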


We will sometimes shorten this to say that the condition holds for $r \gg 0$, but we emphasize that the large enough value of $r$ we need to take might depend on $I$. The same holds for any other stable containment we might consider.

The Stable Harbourne--Huneke Containment enables us to obtain lower bounds on the degrees of all symbolic powers by studying $I^{(m)}$ only for $m \gg 0$. By taking this approach, we prove Chudnovsky's Conjecture for any set of sufficiently many general points in $\PP^N_\kk$, where $\kk$ is an algebraically closed field of any characteristic.

\medskip

\noindent\textbf{Theorems \ref{thm.HaHu} and \ref{thm.Chud}.} Suppose that $\kk$ is an algebraically closed field of arbitrary characteristic, and $N \geqslant 3$. 
\begin{enumerate}
	\item If $s \geqslant 4^N$ then there exists a constant $r(s,N)$, depending only on $s$ and $N$, such that the stable containment $I^{(Nr)} \subseteq \mm^{r(N-1)}I^r$ holds when $I$ defines a general set of $s$ points in $\PP^N_\kk$ and $r \geqslant r(s,N)$. 
	\item Assume that $\text{char } \kk = 0$. If $s \geqslant {N^2+N \choose N}$ then there exists a constant $r(s,N)$, depending only on $s$ and $N$, such that the stable containment $I^{(Nr-N+1)} \subseteq \mm^{(r-1)(N-1)}I^r$ holds when $I$ defines a general set of $s$ points in $\PP^N_\kk$ and $r \geqslant r(s,N)$. 
\end{enumerate}
In particular, Chudnovsky's Conjecture holds for a general set of $s \geqslant 4^N$ points in $\PP^N_\kk$, where $\kk$ is an algebraically closed field of any characteristic.

\medskip

Previously, Chudnovsky's Conjecture had been shown for a \emph{general} set of points in $\PP^3_\kk$ \cite{Dumnicki2015}, a set of at most $N+1$ points in \emph{generic position} in $\PP^N_\kk$ \cite{Dumnicki2015}, a set of a binomial coefficient number of points forming a \emph{star configuration} \cite{BoH, GHM2013}, a set of points in $\PP^N_\kk$ lying on a quadric \cite{FMX2018}, and a \emph{very general} set of points in $\PP^N_\kk$ \cite{DTG2017, FMX2018}.
%	\item $\XX$ is a set of \emph{generic} points in $\PP^N_{\kk(\z)}$ \cite{FMX2018};

By saying that the conjecture holds for a \emph{very general} set of points in $\PP^N_\kk$, we mean that there exist infinitely many open dense subsets $U_m$, $m \in \NN$, of the Hilbert scheme of $s$ points in $\PP^N_\kk$ such that Chudnovsky's Conjecture holds for all $\XX \in \bigcap_{m=1}^\infty U_m$. We remove this infinite intersection of open dense subsets to show that there exists one open dense subset $U$ of the Hilbert scheme of $s$ points in $\PP^N_\kk$ such that Chudnovsky's Conjecture holds for all $\XX \in U$.

In a different approach to tighten the containment $I^{(hm)} \subseteq I^m$, Harbourne's Conjecture \cite[Conjecture 8.4.2]{Seshadri} asks if $I^{(hm-h+1)} \subseteq I^m$ holds for all $m \in \NN$. While this conjecture can fail for various choices of $m$ and $I$ \cite{counterexamples, HaSeFermat,ResurgenceKleinWiman,MalaraSzpond,RealsCounterexample,BenCounterexample,DrabkinSeceleanu}, the stable version of this conjecture, known as the \emph{Stable Harbourne Conjecture} \cite[Conjecture 2.1]{GrifoStable}, remains open.

\begin{conjecture}[Stable Harbourne Conjecture]\label{conj.Ha}
	Let $I \subseteq \kk[\PP^N_\kk]$ be a radical ideal of big height $h$. For all $r \gg 0$, we have
	$$I^{(hr-h+1)} \subseteq I^r.$$
\end{conjecture}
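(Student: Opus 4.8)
\medskip

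\noindent\textbf{Proof proposal.}
My plan is to establish the Stable Harbourne Conjecture in the case that $I = I_\XX \subseteq R := \kk[\PP^N_\kk]$ is the defining ideal of a general set of $s$ points $\XX$, with $\text{char}\,\kk = 0$; then $I$ is radical of big height $h = N$, so the goal becomes $I^{(Nr-N+1)} \subseteq I^r$ for all $r \gg 0$. The first step is to reduce this to a single degree inequality. For a set of reduced points it is standard that $I^{(r)}$ equals the saturation of $I^r$ (the only associated prime $I^r$ can acquire beyond the point ideals is $\mm$), so $(I^r)_t = (I^{(r)})_t$ for all $t \geqslant \reg(I^r)$; and since $Nr-N+1 \geqslant r$ we have $I^{(Nr-N+1)} \subseteq I^{(r)}$. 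Hence, once
$$\alpha\bigl(I^{(Nr-N+1)}\bigr) \geqslant \reg(I^r),$$
every nonzero homogeneous element of $I^{(Nr-N+1)}$ has degree at least $\reg(I^r)$, lies in $I^{(r)}$ in a degree where $I^{(r)}$ and $I^r$ agree, and therefore lies in $I^r$; so it suffices to prove the displayed inequality for all $r \gg 0$.

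Next I would bound the two sides. For the left side, subadditivity of $m \mapsto \alpha(I^{(m)})$ yields $\alpha(I^{(m)}) \geqslant m\,\widehat{\alpha}(I)$, where $\widehat{\alpha}(I) = \lim_m \alpha(I^{(m)})/m$ is the Waldschmidt constant, and the bound of Esnault--Viehweg recalled above gives $\widehat{\alpha}(I) \geqslant \frac{\alpha(I)+1}{N}$; thus $\alpha\bigl(I^{(Nr-N+1)}\bigr) \geqslant (Nr-N+1)\,\widehat{\alpha}(I)$. For the right side I would use the asymptotic linearity of the regularity of powers (Cutkosky--Herzog--Trung, Kodiyalam): there are integers $\gamma(I)$ and $e$ with $\reg(I^r) = \gamma(I)\,r + e$ for $r \gg 0$; since $\reg(I^r)$ lies between $\alpha(I^r) = r\,\alpha(I)$ and $d(I)\,r + O(1)$, where $d(I)$ is the top degree of a minimal generator of $I$, we get $\alpha(I) \leqslant \gamma(I) \leqslant d(I)$. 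For a general set of points the points impose independent conditions on forms, so $H^1$ of the ideal sheaf vanishes from degree $\alpha(I)$ onward, giving $\reg(I) \leqslant \alpha(I)+1$ and hence $d(I) \leqslant \alpha(I)+1$; therefore $\gamma(I) \in \{\alpha(I),\,\alpha(I)+1\}$.

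Combining the two estimates, it is enough to have $N\,\widehat{\alpha}(I) > \gamma(I)$, for then $(Nr-N+1)\,\widehat{\alpha}(I) > \gamma(I)\,r + e = \reg(I^r)$ as soon as $r$ is large. If $\gamma(I) = \alpha(I)$ this is automatic from Esnault--Viehweg, since $N\,\widehat{\alpha}(I) \geqslant \alpha(I)+1 > \alpha(I) = \gamma(I)$. In the remaining case $\gamma(I) = \alpha(I)+1$ --- which occurs whenever $I$ has a minimal generator in degree $\alpha(I)+1$, and so is the typical situation --- one needs the strict inequality $\widehat{\alpha}(I) > \frac{\alpha(I)+1}{N}$.

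This last point is where I expect the main obstacle to lie. Here is how I would address it: using the elementary estimate $\alpha(I) \leqslant \sqrt[N]{N!\,s}$ together with a lower bound for the Waldschmidt constant of a general set of $s$ points of the shape $\widehat{\alpha}(I) \geqslant c_N\,\sqrt[N]{s}$ for a suitable constant $c_N > \tfrac{1}{N}\sqrt[N]{N!}$ (for instance $c_N = \tfrac{N}{N+1}$, which satisfies this precisely when $N \geqslant 3$), one gets $N\,\widehat{\alpha}(I) > \alpha(I)+1$ for all $s$ past an explicit threshold; the finitely many smaller point sets falling in the relevant case are then handled separately (many being complete intersections, for which symbolic and ordinary powers coincide so the containment is trivial, the rest by direct inspection). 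Producing a strong enough unconditional lower bound on $\widehat{\alpha}(I)$ for a general set of points is the genuinely hard ingredient, and is the kind of statement that requires passing to the blow-up of $\PP^N$ at $\XX$ and degenerating to special, more computable configurations, in the spirit of the work of Dumnicki and collaborators. (For the large range $s \geqslant \binom{N^2+N}{N}$ one can sidestep all of this: the stable Harbourne--Huneke containment $I^{(Nr-N+1)} \subseteq \mm^{(r-1)(N-1)}I^r$ of Theorem~\ref{thm.HaHu} gives $I^{(Nr-N+1)} \subseteq I^r$ for $r \gg 0$ immediately.)
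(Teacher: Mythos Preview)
Your degree-comparison framework is correct and is exactly the mechanism behind the paper's Lemma~\ref{lem.comtainment.gen.Exp.Resurgence}: for ideals of points one has $(I^r)_t=(I^{(r)})_t$ for $t\geqslant\reg(I^r)$, so $\alpha(I^{(Nr-N+1)})\geqslant\reg(I^r)$ for $r\gg0$ suffices. The genuine gap is in the ``remaining case'' $\gamma(I)=\alpha(I)+1$, which is the typical one. There you need a \emph{strict} improvement over Esnault--Viehweg, and you propose to obtain it from a bound of the shape $\ahat(I)\geqslant c_N\sqrt[N]{s}$ for a \emph{general} set of points. Such bounds (e.g.\ $\ahat\geqslant\lfloor\sqrt[N]{s}\rfloor$ from \cite{DTG2017}) are presently known only for \emph{very general} points; upgrading them to general points is precisely the difficulty the paper is designed to overcome, so invoking them here is circular. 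The ``finitely many small cases by inspection'' step is also not benign: the threshold produced by your inequality scales like a power of $N$, and none of those configurations are complete intersections.

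The paper takes a different route that sidesteps this. It first works with the ideal $I(\z)$ of $s$ \emph{generic} points in $\PP^N_{\kk(\z)}$, where the full Chudnovsky bound $\ahat(I(\z))\geqslant(\alpha(I(\z))+N-1)/N$ is already available \cite[Theorem~2.7]{FMX2018}. With that input your degree comparison goes through cleanly for $N\geqslant3$ (since $N\,\ahat\geqslant\alpha+N-1>\alpha+1\geqslant\reg(I)$), and one obtains a \emph{single} containment $I(\z)^{(Nc-N+1)}\subseteq\mm_\z I(\z)^c$ for some fixed $c$ (Lemma~\ref{lem.comtainment.gen.Exp.Resurgence}). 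This one containment is then specialized via Krull to an open dense set of $\AA_\kk^{s(N+1)}$, giving $I(\a)^{(Nc-N+1)}\subseteq\mm I(\a)^c$ for all $\a$ there. Finally, \cite[Theorem~3.3]{GrifoHunekeMukundan} turns that single containment into expected resurgence $\rho(I(\a))<N$, which implies the Stable Harbourne containment (and indeed $I^{(Nr-C)}\subseteq I^r$ for any fixed $C$ and $r\gg0$). The key idea you are missing is this ``one containment over $\kk(\z)$ $\Rightarrow$ specialize $\Rightarrow$ one containment implies all'' maneuver; it is what allows the paper to avoid proving Waldschmidt-type lower bounds for general (rather than very general or generic) points directly.
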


As in the previous stable conjectures we discussed, the large enough value of $r$ we need to take might depend on $I$.

Adding more evidence towards the Stable Harbourne Conjecture \cite{GrifoStable,GrifoHunekeMukundan,GHMGorenstein,YuXieStefan,SteinerConfigurations}, we prove this conjecture for any general set of points in $\PP^N_\kk$, by verifying a stronger conjecture on the \emph{resurgence} of $I$, a notion that we shall describe in more detail in Section \ref{sec.prel}.

\medskip

\noindent\textbf{Theorem \ref{thm.exp.resurgence.general} and Corollary \ref{cor.gen.Ha}.} 
Suppose that $\kk$ is an algebraically closed field of arbitrary characteristic. The defining ideal $I$ of a general set of points in $\PP^N_\kk$ has expected resurgence. In particular, $I$ satisfies the Stable Harbourne Conjecture, and for any $C$,
$$I^{(rN-C)} \subseteq I^r \textrm{ for all } r \gg 0.$$

\medskip

Our method in proving Theorems \ref{thm.exp.resurgence.general}, \ref{thm.HaHu} and, subsequently, Theorem \ref{thm.Chud} is to examine when we can obtain stable containment from one containment, and applying \emph{specialization}. Techniques of specialization were already used in \cite{FMX2018} to establish Chudnovsky's Conjecture for a very general set of points. The main obstacle they faced in going from a \emph{very general} set of points to a \emph{general} set of points is that specialization works for each $m$ to produce an open dense subset $U_m$ in the Hilbert scheme of $s$ points in $\PP^N_\kk$ where the corresponding containment holds, and to get a statement that holds for all $m$, one would need to take the intersection $\bigcap_{m=1}^\infty U_m$. To overcome this difficulty and to apply specialization techniques also to the stable containment problem, the novelty in our method is to apply specialization for only one power and then show that one appropriate containment would lead to the stable containment. For example, to show Chudnovsky's Conjecture for general points, we prove the following result:

\medskip

\noindent\textbf{Corollary \ref{cor.stableChud}.} 
Suppose that for some value $c \in \NN$, $I^{(hc-h)} \subseteq \mm^{c(h-1)}I^c$. For all $m \gg 0$,
$$I^{(hm-h)} \subseteq \mm^{m(h-1)}I^m.$$

In fact, we prove a much more general statement (see Theorem \ref{thm.14all} and the other results in Section \ref{sec.stable}). This gives a strategy to attack Chudnovsky's Conjecture, and more generally the Stable Harbourne--Huneke Containment, in other settings: rather than showing the containment holds for all $m$ large enough, it is sufficient to find \emph{one} $m$ for which the containment holds. We use this idea in Theorems \ref{thm.exp.resurgence.general} and \ref{thm.HaHu}, by explicitly proving that one containment is satisfied. This is achieved by using specialization to pass the problem to the generic points in $\PP^N_{\kk(\z)}$, where the situation is better understood.

Finally, note that Chudnovsky's inequality, Harbourne's conjecture and the Habourne--Huneke containment conjectures have all been proved for squarefree monomial ideals in \cite{CEHH2017}. For more related problems and questions, the interested reader is referred to, for instance, \cite{CHHVT2020,SurveySP,ContainmentSurvey}.

The paper is outlined as follows. In the next section we collect necessary notations and terminology, and also recall important results that will be used often later on. In Section \ref{sec.stable}, we present our first main result, Theorem \ref{thm.14all}, where the stable containment is derived from one containment. In Section \ref{sec.HH}, we showcase our next main results, Theorems \ref{thm.exp.resurgence.general} and \ref{thm.HaHu}, where we establish the stable Harbourne conjecture for a general set of points and the stable Harbourne--Huneke containment for a general set of sufficiently many points. The last section of the paper focuses on Chudnovsky's Conjecture. In Theorem \ref{thm.Chud}, we prove Chudnovsky's Conjecture for a general set of sufficiently many points. We also establish a slightly weaker version of Chudnovsky's Conjecture for a general set of arbitrary number of points; see Theorem \ref{thm.weakChud}. As a consequence of this result, we show in Corollary \ref{chud.symbolic} that the defining ideal of a scheme of fat points with a general support and of equal multiplicity (at least 2) satisfies a Chudnovsky-type inequality. 

\begin{acknowledgement}
	The authors thank Grzegorz Malara and Halszka Tutaj-Gasi\'nska for their suggestion to look at B\"or\"oczky configurations, and Alexandra Seceleanu for pointing to \cite{DHNSST2015}, all of which resulted in Example \ref{example.Bor}. We also thank Alexandra Seceleanu for kindly sharing the new version of \cite{DrabkinSeceleanu}, which lead to Example \ref{DrabkinSeceleanu}. We also thank Jack Jeffries for helpful discussions, and Justyna Szpond and Tomasz Szemberg for comments on an early draft of this paper. Finally, we thank the referee for their many helpful comments, which in particular lead to consider in what ways our bounds are uniform. This project included many Macaulay2 \cite{M2} computations. The third named author is partially supported by Louisiana Board of Regents (grant \# LEQSF(2017-19)-ENH-TR-25).
\end{acknowledgement}

%%%%%%%%%%%%%%%%%%%%%%%%%%%%%%%%%%%%%

\section{Preliminaries} \label{sec.prel}

In this section, we shall recall notations, terminology and results that will be used often in the paper. Throughout the paper, let $\kk$ be a field, and let $\kk[\PP^N_\kk]$ represent the homogeneous coordinate ring of $\PP^N_\kk$. Since most of the statements in this paper become rather straightforward when $N = 1$, we shall assume that $N \geqslant 2$. Our work evolves around symbolic powers of ideals, so let us start with the definition of these objects.

\begin{definition}
Let $S$ be a commutative ring and let $I \subseteq S$ be an ideal. For $m \in \NN$, the \emph{$m$-th symbolic power} of $I$ is defined to be
$$I^{(m)} = \bigcap_{\pp \in \Ass(I)} \left(I^mS_\pp \cap S\right).$$
\end{definition}

For an ideal $I$, the \emph{big height} of $I$ is defined to be the maximum height of a minimal associated prime of $I$. The following result of Johnson \cite{Johnson} is a generalization of the aforementioned containment of Ein--Lazarsfeld--Smith, Hochster--Huneke and Ma--Schwede \cite{ELS,comparison,MaSchwede}.

\begin{theorem}[Johnson \cite{Johnson}] \label{thm.Johnson}
Let $I \subseteq \kk[\PP^N_\kk]$ be a radical ideal of big height $h$. For all $k \geqslant 1$ and $a_1, \ldots, a_k \geqslant 0$, we have
$$I^{(hk+a_1+ \dots + a_k)} \subseteq I^{(a_1 + 1)} \cdots I^{(a_k + 1)}.$$
\end{theorem}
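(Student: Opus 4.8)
The plan is to follow the strategy of Ein--Lazarsfeld--Smith and Hochster--Huneke \cite{ELS,comparison}, interpolating between the given symbolic power and the target product by the asymptotic multiplier ideals (in characteristic zero), respectively the asymptotic test ideals (in positive characteristic), attached to the graded family $\mathfrak{a}_\bullet = \{I^{(m)}\}_{m \geqslant 0}$; since $\kk[\PP^N_\kk]$ is a polynomial ring over a field, these two cases exhaust all $\kk$ (no mixed characteristic occurs). Write $R = \kk[\PP^N_\kk]$. The inclusions $I^{(a)}I^{(b)} \subseteq I^{(a+b)}$ make $\mathfrak{a}_\bullet$ a genuine graded family, so for each rational $c \geqslant 0$ one has the asymptotic multiplier ideal $\mathcal{J}(\mathfrak{a}_\bullet^{c})$, and I will use three standard properties: (i) $I^{(m)} \subseteq \mathcal{J}(\mathfrak{a}_\bullet^{m})$ for every $m \in \NN$; (ii) subadditivity, $\mathcal{J}(\mathfrak{a}_\bullet^{c_1 + \cdots + c_k}) \subseteq \mathcal{J}(\mathfrak{a}_\bullet^{c_1}) \cdots \mathcal{J}(\mathfrak{a}_\bullet^{c_k})$; and (iii) compatibility with localization, $\mathcal{J}(\mathfrak{a}_\bullet^{c})_\pp = \mathcal{J}\big((\mathfrak{a}_\bullet)_\pp^{c}\big)$.

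Granting these, the argument is short. Writing $hk + a_1 + \cdots + a_k = \sum_{i=1}^{k}(h + a_i)$ and combining (i) with (ii) yields
$$I^{(hk + a_1 + \cdots + a_k)} \subseteq \mathcal{J}\big(\mathfrak{a}_\bullet^{hk + a_1 + \cdots + a_k}\big) \subseteq \prod_{i=1}^{k} \mathcal{J}\big(\mathfrak{a}_\bullet^{h + a_i}\big),$$
so it remains to establish the single inclusion $\mathcal{J}(\mathfrak{a}_\bullet^{h + a}) \subseteq I^{(a+1)}$ for each integer $a \geqslant 0$. Because $I$ is radical, $\Ass(I) = \Min(I) = \Ass(I^{(a+1)})$ and $I^{(a+1)} = \bigcap_{\pp \in \Min(I)} \big((I^{(a+1)})_\pp \cap R\big)$, so by (iii) it suffices to check, for each $\pp \in \Min(I)$, that $\mathcal{J}(\mathfrak{a}_\bullet^{h + a})_\pp \subseteq (I^{(a+1)})_\pp$. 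This is where the big height hypothesis enters: $R_\pp$ is a regular local ring of dimension $c := \height \pp \leqslant h$; since $\pp$ is minimal over a radical ideal, $(I^{(m)})_\pp = (\pp R_\pp)^{m}$, so the localized family is exactly the $\pp R_\pp$-adic filtration; and the asymptotic multiplier ideal of this filtration with coefficient $h + a$ is computed by a single blow-up of the closed point of $\Spec R_\pp$ (discrepancy $c-1$), giving $\mathcal{J}(\mathfrak{a}_\bullet^{h+a})_\pp = (\pp R_\pp)^{h + a - c + 1}$. As $c \leqslant h$, we get $h + a - c + 1 \geqslant a + 1$, hence $(\pp R_\pp)^{h + a - c + 1} \subseteq (\pp R_\pp)^{a+1} = (I^{(a+1)})_\pp$. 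Intersecting over $\pp \in \Min(I)$ finishes the characteristic-zero case.

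In positive characteristic the same skeleton goes through with the asymptotic test ideal $\tau(\mathfrak{a}_\bullet^{c})$ replacing $\mathcal{J}$: the containment, the subadditivity on a regular ring, and the local computation $\tau\big((\pp R_\pp\text{-adic})^{h+a}\big) = (\pp R_\pp)^{h+a-c+1}$ all have tight-closure analogues (Hochster--Huneke, Hara--Yoshida, Takagi); alternatively, one may reduce the characteristic-zero statement to characteristic $p \gg 0$ by standard spreading-out. I expect the crux to be property (i) together with the subadditivity in (ii): the former is the containment of the \emph{symbolic} graded family inside its own asymptotic multiplier ideal, and the latter is the Demailly--Ein--Lazarsfeld subadditivity theorem in its form for asymptotic multiplier ideals of a single graded family (and, in characteristic $p$, subadditivity for test ideals must be invoked with some care). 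Everything else is bookkeeping with the single inequality $\height \pp \leqslant h$, which is the only input coming from the hypothesis on $I$.
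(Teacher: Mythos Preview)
The paper does not give its own proof of this statement; Theorem~\ref{thm.Johnson} is quoted from \cite{Johnson} as a black box. So there is no in-paper argument to compare against.

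Your sketch is essentially Johnson's own argument, which in turn is the natural extension of the Ein--Lazarsfeld--Smith/Hochster--Huneke method: interpolate through the asymptotic multiplier (respectively test) ideals of the symbolic graded family $\mathfrak{a}_\bullet = \{I^{(m)}\}$, split the exponent as $hk+\sum a_i = \sum_{i=1}^k (h+a_i)$ via subadditivity, and then verify $\mathcal{J}(\mathfrak{a}_\bullet^{h+a}) \subseteq I^{(a+1)}$ by localizing at each minimal prime $\pp$ of $I$, where the family becomes the $\pp R_\pp$-adic filtration and the multiplier ideal is computed by a single blow-up with discrepancy $\height\pp - 1 \leqslant h-1$. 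The three properties you invoke (containment $\mathfrak{a}_m \subseteq \mathcal{J}(\mathfrak{a}_\bullet^m)$, subadditivity on a regular ambient, and compatibility with localization) are all standard, and your local computation $\mathcal{J}\big((\pp R_\pp)_\bullet^{\,h+a}\big) = (\pp R_\pp)^{h+a-\height\pp+1}$ is correct. The positive-characteristic version via asymptotic test ideals (with subadditivity due to Hara--Yoshida/Takagi) goes through the same way. In short: the proposal is correct and is the expected proof; the paper simply omits it because the result is not new.
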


Another result that we shall make use of is the following stable containment of the second author of the present paper \cite[Theorem 2.5]{GrifoStable}.

\begin{theorem}[Grifo] \label{thm.Grifo}
Let $I \subseteq \kk[\PP^N_\kk]$ be a radical ideal of big height $h$. If $I^{(hc-h)} \subseteq I^c$ for some constant $c \geqslant 1$ then for all $r \gg 0$, we have
$$I^{(hr-h)} \subseteq I^r.$$
More concretely, this containment holds for all $r \geqslant hc$.
\end{theorem}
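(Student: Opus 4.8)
The statement to prove is: if $I \subseteq \kk[\PP^N_\kk]$ is a radical ideal of big height $h$ and $I^{(hc-h)} \subseteq I^c$ for some $c \in \NN$, then $I^{(hr-h)} \subseteq I^r$ for all $r \geqslant hc$.

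Let me think about this. We want to bootstrap one containment at level $c$ to all levels $r \gg 0$, specifically $r \geq hc$.

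Key tools available:
- Theorem 2.7 (Johnson): $I^{(hk + a_1 + \cdots + a_k)} \subseteq I^{(a_1+1)} \cdots I^{(a_k+1)}$.
- Symbolic powers satisfy $I^{(a)} I^{(b)} \subseteq I^{(a+b)}$ and $I^{(a)} \subseteq I^{(b)}$ when $a \geq b$.

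**Strategy.** Given $r \geq hc$, write $r = qc + t$ where... hmm, actually let me think about how to combine. We have $I^{(hc-h)} \subseteq I^c$. I want to get to $I^{(hr - h)} \subseteq I^r$.

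Idea: Use Johnson's theorem to "break up" $I^{(hr-h)}$ into a product of pieces, each of which looks like $I^{(hc-h)}$, then apply the hypothesis to each piece to get $I^c$ factors, whose product is $I^r$... but we'd need $r$ to be a multiple of $c$, roughly.

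Let's try: Write $r = ac + b$ with $0 \le b < c$, $a \geq h$ (possible since $r \geq hc$). Actually we want the decomposition to work out. Consider $I^{(hr - h)}$. We have $hr - h = h(ac + b) - h = h \cdot ac + hb - h = a(hc) + h(b-1)$. Hmm.

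Alternative: $hr - h = a(hc - h) + ah + hb - h = a(hc-h) + h(a + b - 1)$. So $hr - h = a(hc - h) + [\text{something}]$.

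Let me set it up as a Johnson application with $k = a$ pieces: We want $I^{(hk + a_1 + \cdots + a_k)} \subseteq I^{(a_1+1)}\cdots I^{(a_k+1)}$ with $k = a$. We need $hr - h = ha + (a_1 + \cdots + a_a)$, so $a_1 + \cdots + a_a = hr - h - ha = h(r - 1 - a)$. And we want each $a_i + 1$ chosen so that $I^{(a_i+1)}$... hmm, actually we want the product of the $I^{(a_i+1)}$ to be contained in $I^r$.

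Better: apply Johnson with all $a_i$ equal to $hc - h - 1$ (so $a_i + 1 = hc - h$), giving $k$ factors of $I^{(hc-h)} \subseteq I^c$, for a total of $I^{kc} = I^r$ if $kc = r$. For this we need $r$ divisible by $c$ — not general. So we need one "remainder" factor.

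**Refined plan.** Write $r = kc + j$ with $k \geq h$, $0 \leq j < c$ (wait, need $k \geq 1$ at least; since $r \geq hc$ we can take $k \geq h$, with $0 \le j < c$; actually if $r = hc$ exactly take $k = h, j = 0$). Apply Johnson's theorem to express a large symbolic power of $I$ as a product of $k$ copies of $I^{(hc-h)}$ times one extra symbolic power $I^{(a)}$, where $a$ is chosen so that: (i) the total matches $hr - h$, and (ii) that extra factor $I^{(a)}$ is contained in $I^j$ — for which we'd want $a \geq hj - h$ and then use Johnson *again* (or directly $I^{(hj)} \subseteq I^j$ from ELS/Hochster–Huneke since $j < c \leq$ whatever, but big height is $h$ so $I^{(hj)} \subseteq I^j$ always). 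Let me just aim: we want factors $I^{(hc-h)}$ (k times) $\cdot I^{(hj)} \subseteq I^c \cdots I^c \cdot I^j = I^{kc+j} = I^r$. The total symbolic degree consumed, using the super-multiplicativity $I^{(b_1)} \cdots I^{(b_\ell)} \subseteq I^{(b_1 + \cdots + b_\ell)}$ in reverse — no wait, that's the wrong direction. We need Johnson: $I^{(hk' + \sum a_i)} \subseteq \prod I^{(a_i+1)}$ with $k' = k+1$ pieces, $k$ of them having $a_i + 1 = hc - h$ and one having $a_{k+1} + 1 = hj$ (if $j \geq 1$; if $j = 0$ just use $k$ pieces). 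Then $\sum a_i = k(hc - h - 1) + (hj - 1)$, and $hk' = h(k+1)$, so the symbolic index is $h(k+1) + k(hc-h-1) + hj - 1 = hk + h + khc - khc... $ let me just: $= hk + h + k(hc - h - 1) + hj - 1 = hk + h + khc - kh - k + hj - 1 = h + khc - k + hj - 1 = khc + hj + h - k - 1$. We want this $\leq hr - h = h(kc + j) - h = hkc + hj - h$. So we need $khc + hj + h - k - 1 \leq hkc + hj - h$, i.e., $h - k - 1 \leq -h$, i.e., $2h - 1 \leq k$. Hmm, that needs $k \geq 2h - 1$, slightly more than $r \geq hc$ gives when $c$ is small.

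**So the main subtlety** — and what I'd expect to be the crux — is bookkeeping the indices so the bound comes out to exactly $r \geq hc$ rather than something larger; this may require a sharper combination than the naive one above, perhaps distributing the "slack" more cleverly across the Johnson decomposition (e.g. not insisting every block be exactly $I^{(hc-h)}$, or using $I^{(a_i+1)} \subseteq I^{(a_i' + 1)}$ to pad indices). I would carefully choose the partition: given $r \geq hc$, write $hr - h$ and greedily peel off blocks of size $hc - h$ mapping to $I^c$, tracking that $k$ blocks are available precisely when $r \geq kc$, and absorb the leftover (which is $< $ one block, i.e. accounts for $< c$ powers) using the universal containment $I^{(h\ell)} \subseteq I^\ell$. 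The inequality to verify reduces to a one-line arithmetic check once the partition is fixed, and monotonicity of symbolic powers ($I^{(p)} \subseteq I^{(q)}$ for $p \geq q$) lets us round the leftover index up to a convenient value. After that, the proof is just: apply Johnson once to split, apply the hypothesis $I^{(hc-h)} \subseteq I^c$ to each main block, apply ELS/Hochster–Huneke to the leftover block, and multiply.
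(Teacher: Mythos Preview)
The paper does not actually give a proof of this theorem; it is quoted from \cite[Theorem~2.5]{GrifoStable}. However, the paper's proof of its generalization, Theorem~\ref{thm.14all}, reveals the intended decomposition, and it differs from yours in a way that matters for the explicit bound $r \geqslant hc$.

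Your overall strategy is correct and would establish the asymptotic statement $I^{(hr-h)} \subseteq I^r$ for $r \gg 0$: apply Johnson's theorem to split $I^{(hr-h)}$ into blocks, feed most blocks through the hypothesis $I^{(hc-h)} \subseteq I^c$, and clean up a remainder with $I^{(h\ell)} \subseteq I^\ell$. Your arithmetic is also right: with $r = kc + j$, $0 \leqslant j < c$, using $k$ blocks of $I^{(hc-h)}$ and one block of $I^{(hj)}$, Johnson forces $k \geqslant 2h-1$, so you only get the containment for $r \geqslant (2h-1)c$ rather than $r \geqslant hc$. You correctly flag this as the crux but do not resolve it.

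The sharper partition, visible in the proof of Theorem~\ref{thm.14all}, is to divide by $hc$ rather than by $c$: write $r = qhc + t$ with $0 \leqslant t \leqslant hc-1$, and apply Johnson with $k = qh + q + t - 1$ pieces, taking $a_1 = \dots = a_{qh} = hc-h-1$ and $a_{qh+1} = \dots = a_k = 0$. One checks $hk + \sum a_i = hr - h$ exactly, and Johnson yields
\[
I^{(hr-h)} \subseteq \bigl(I^{(hc-h)}\bigr)^{qh} I^{q+t-1} \subseteq I^{qhc} \cdot I^{q+t-1} = I^{qhc + q + t - 1}.
\]
Since $qhc + q + t - 1 \geqslant qhc + t = r$ as soon as $q \geqslant 1$, i.e.\ $r \geqslant hc$, the sharp bound follows. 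The point you were missing is that the ``remainder'' should not be absorbed by a single large block $I^{(hj)}$ but rather by many trivial blocks $I^{(1)} = I$; each such block costs $h$ in the Johnson index but returns a full power of $I$, and there are $q + t - 1$ of them, so the slack grows with $q$.
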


As mentioned before, for a homogeneous ideal $I \subseteq \kk[\PP^N_\kk]$, $\alpha(I)$ denotes its least generating degree. The \emph{Waldschmidt constant} of $I$ is defined to be
$$\ahat(I) := \lim_{m \rightarrow \infty} \dfrac{\alpha(I^{(m)})}{m}.$$
The Waldschmidt constant of an ideal $I$ is known to exist and, furthermore, we have 
$$\ahat(I) = \inf_{m \in \NN} \dfrac{\alpha(I^{(m)})}{m}.$$ 
See, for example, \cite[Lemma 2.3.1]{BoH}. Hence, Chudnovsky's Conjecture can be restated as in the following equivalent statement:

\begin{conjecture}[Chudnovsky]
Let $I \subseteq \kk[\PP^N_\kk]$ be the defining ideal of a set of points in $\PP^N_\kk$. Then,
$$\ahat(I) \geqslant \dfrac{\alpha(I)+N-1}{N}.$$
\end{conjecture}

Symbolic powers are particularly easier to understand for the defining ideals of points. Let $\XX = \{P_1, \dots, P_s\} \subseteq \PP^N_\kk$ be a set of $s \geqslant 1$ distinct points. Let $\pp_i \subseteq \kk[\PP^N_\kk]$ be the defining ideal of $P_i$ and let $I_\XX = \pp_1 \cap \dots \cap \pp_s$ be the defining ideal of $\XX$. Then, for all $m \in \NN$, it is a well-known fact that
$$I_\XX^{(m)} = \pp_1^m \cap \dots \cap \pp_s^m.$$

\begin{remark}
The set of all collections of $s$ not necessarily distinct points in $\PP^N_\kk$ is parameterized by the \emph{Chow variety} $G(1,s,N+1)$ of $0$-cycles of degree $s$ in $\PP^N_\kk$ (cf. \cite{GKZ1994}). Thus, a property $\mathcal{P}$ is said to hold for a \emph{general} set of $s$ points in $\PP^N_\kk$ if there exists an open dense subset $U \subseteq G(1,s,N+1)$ such that $\mathcal{P}$ holds for any $\XX \in U$.
\end{remark}

Let $(z_{ij})_{1 \leqslant i \leqslant s, 0 \leqslant j \leqslant N}$ be $s(N+1)$ new indeterminates. We shall use $\z$ and $\a$ to denote the collections $(z_{ij})_{1 \leqslant i \leqslant s, 0 \leqslant j \leqslant N}$ and $(a_{ij})_{1 \leqslant i \leqslant s, 0 \leqslant j \leqslant N}$, respectively. Let
$$P_i(\z) = [z_{i0}: \dots : z_{iN}] \in \PP^N_{\kk(\z)} \quad \text{ and } \quad \XX(\z) = \{P_1(\z), \dots, P_s(\z)\}.$$
The set $\XX(\z)$ is often referred to as the set of $s$ \emph{generic} points in $\PP^N_{\kk(\z)}$. For any $\a \in \AA^{s(N+1)}_\kk$, let $P_i(\a)$ and $\XX(\a)$ be obtained from $P_i(\z)$ and $\XX(\z)$, respectively, by setting $z_{ij} = a_{ij}$ for all $i,j$. There exists an open dense subset $W_0 \subseteq \AA^{s(N+1)}_\kk$ such that $\XX(\a)$ is a set of distinct points in $\PP^N_\kk$ for all $\a \in W_0$ (and all subsets of $s$ points in $\PP^N_\kk$ arise in this way). The following result allows us to focus on open dense subsets of $\AA^{s(N+1)}_\kk$ when discussing general sets of points in $\PP^N_\kk$.

\begin{lemma}[\protect{\cite[Lemma 2.3]{FMX2018}}] \label{lem.Hilbert}
Let $W \subseteq \AA^{s(N+1)}_\kk$ be an open dense subset such that a property $\mathcal{P}$ holds for $\XX(\a)$ whenever $\a \in W$. Then, the property $\mathcal{P}$ holds for a general set of $s$ points in $\PP^N_\kk$.
\end{lemma}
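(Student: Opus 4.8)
The statement to prove is Lemma~\ref{lem.Hilbert}: if $W \subseteq \AA^{s(N+1)}_\kk$ is open dense and a property $\mathcal{P}$ holds for $\XX(\a)$ for every $\a \in W$, then $\mathcal{P}$ holds for a general set of $s$ points in $\PP^N_\kk$, i.e.\ there is an open dense $U \subseteq G(1,s,N+1)$ over which $\mathcal{P}$ holds. The plan is to exhibit a natural surjective map $\Phi \colon W_0 \to (\text{locus of distinct points in } G(1,s,N+1))$ sending $\a \mapsto \XX(\a)$, where $W_0$ is the open dense locus of $\AA^{s(N+1)}_\kk$ parameterizing $s$ ordered distinct points with all coordinate vectors nonzero, and then push the open set $W \cap W_0$ forward through $\Phi$.

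First I would set up the geometry. The assignment $\a \mapsto \XX(\a)$ factors as $W_0 \to (\PP^N)^s \to G(1,s,N+1)$: the first map is the obvious product of the quotient maps $\AA^{N+1}\setminus\{0\} \to \PP^N$ on each block of coordinates, restricted to the locus of distinct points; the second is the quotient by the symmetric group $S_s$ onto the Chow variety of $0$-cycles of degree $s$ (its open locus of reduced cycles). Both maps are open — the first because $\PP^N$ is a geometric quotient of $\AA^{N+1}\setminus\{0\}$ by $\GG_m$ and such quotient maps are open (flat, even), and the second because the quotient map by a finite group acting on a variety is open. Hence $\Phi \colon W_0 \to G(1,s,N+1)$ is an open morphism onto the open dense locus $V$ of reduced $0$-cycles.

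Next I would extract the open set. By hypothesis $W$ is open dense in $\AA^{s(N+1)}_\kk$, so $W \cap W_0$ is open dense in $W_0$, hence dense in $\AA^{s(N+1)}_\kk$ (an irreducible variety, so a dense open set is nonempty and its complement is a proper closed set). Since $\Phi$ is open, $U := \Phi(W \cap W_0)$ is open in $V$, hence open in $G(1,s,N+1)$; and it is dense because $\Phi$ is dominant (its image $V$ is dense in the Chow variety, as the distinct-points locus is). Finally, every $\XX \in U$ is of the form $\XX(\a)$ for some $\a \in W \cap W_0 \subseteq W$, so $\mathcal{P}$ holds for $\XX$. This gives the required open dense $U$, and by definition $\mathcal{P}$ holds for a general set of $s$ points in $\PP^N_\kk$.

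I expect the main technical point to be the openness of $\Phi$ — specifically, justifying that the $\GG_m^s$-quotient map onto $(\PP^N)^s$ and the $S_s$-quotient onto the Chow variety are open maps, and being careful that composition of open maps is open. (One can alternatively avoid invoking Chow-variety openness directly and instead argue on the level of the Hilbert scheme of $s$ points, but the cleanest route is the elementary fact that geometric quotient maps by reductive groups acting on varieties are open.) Everything else — density, the factorization, transfer of $\mathcal{P}$ — is formal. In fact this is exactly \cite[Lemma~2.3]{FMX2018}, so in practice I would simply cite that; the above is the proof one would reconstruct if needed.
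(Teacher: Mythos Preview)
The paper does not give its own proof of this lemma; it is stated with a direct citation to \cite[Lemma~2.3]{FMX2018} and no argument is supplied. Your reconstruction is correct and is the standard argument (open quotient maps $\AA^{N+1}\setminus\{0\}\to\PP^N$ and $(\PP^N)^s\to G(1,s,N+1)$ push the dense open $W\cap W_0$ to a dense open $U$), and you already note that in practice one simply cites the reference, exactly as the paper does.
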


Another related notion about sets of points is that of being in \emph{generic position} or, equivalently, having the \emph{generic} (i.e., maximal) Hilbert function. For a set $\XX \subseteq \PP^N_\kk$ with defining ideal $I_\XX \subseteq \kk[\PP^N_\kk]$, let $H(\XX,t)$ denote its Hilbert function. That is,
$$H(\XX,t) = \dim_\kk \left(\kk[\PP^N_\kk]/I_\XX\right)_t.$$

\begin{definition} Let $\XX \subseteq \PP^N_\kk$ be a collection of $s$ points. Then, $\XX$ is said to be \emph{in generic position} (or equivalently, to have the \emph{generic} Hilbert function) if for all $t \in \NN$, we have
$$H(\XX,t) = \min\left\{s, {t+N \choose N}\right\}.$$ 
\end{definition}
The condition of being in generic position is an open condition so, particularly, the set of generic points in $\PP^N_{\kk(\z)}$ and a general set of points in $\PP^N_\kk$ are both in generic position.

One of our main techniques is specialization. We recall this construction following \cite{Krull1948}.

% Krull's specialization:
\begin{definition}[Krull]
Let $\x$ represent the coordinates $x_0, \dots, x_N$ of $\PP^N_\kk$. Let $\a \in \AA^{s(N+1)}$. The \emph{specialization} at $\a$ is a map $\pi_\a$ from the set of ideals in $\kk(\z)[\x]$ to the set of ideals in $\kk[\x]$, defined by
$$\pi_\a(I) := \{f(\a,\x) ~\big|~ f(\z,\x) \in I \cap \kk[\z,\x]\}.$$
\end{definition}

\begin{remark}
Let $\pp_i(\z)$ and $\pp_i(\a)$ be the defining ideals of $P_i(\z) \in \PP^N_{\kk(\z)}$ and $P_i(\a) \in \PP^N_\kk$, respectively. It follows from \cite[Satz 1]{Krull1948} that there exists an open dense subset $W \subseteq W_0 \subseteq \AA^{s(N+1)}$ such that, for all $\a \in W$ and any $1 \leqslant i \leqslant s$, we have
$$\pi_\a(\pp_i(\z)) = \pp_i(\a).$$
We shall always assume that $\a \in W$ whenever we discuss specialization in this paper.
\end{remark}

\begin{remark} \label{rmk.KrullSpecialization}
Observe that, by the definition and by \cite[Satz 2 and 3]{Krull1948} (see also \cite[Propositions 3.2 and 3.6]{NhiTrung1999}), for fixed $m, r, t \in \NN$, there exists an open dense subset $U_{m,r,t} \subseteq W$ such that for all $\a \in U_{m,r,t}$, we have
$$\pi_\a\left(I(\z)^{(m)}\right) = I(\a)^{(m)} \text{ and } \pi_\a\left({\mm}_{\z}^t I(\z)^r\right) = \mm^t I(\a)^r.$$
\end{remark}
\noindent Here, we use $\mm$ and $\mm_\z$ to denote the maximal homogeneous ideals of $\kk[\x]$ and $\kk(\z)[\x]$, respectively. Note that $\mm_\z$ is the extension of $\mm$ in $\kk(\z)[\x]$. We shall make use of this fact often.

One invariant that plays an important role in the study of the containment problem is the resurgence, introduced by Bocci and Harbourne \cite{BoH}.

\begin{definition}[Resurgence]
	The \emph{resurgence} of the ideal $I$ is given by
	$$\rho(I) = \sup \left\{ \dfrac{a}{b} ~\Big|~ I^{(a)} \not\subseteq I^b \right\}.$$
\end{definition}

The resurgence always satisfies $\rho(I) \geqslant 1$, and over a regular ring, the resurgence of a radical ideal is always at most the big height $h$. As noted in \cite[Remark 2.7]{GrifoStable}, the Stable Harbourne Conjecture follows immediately whenever $\rho(I) < h$. In that case, we say that $I$ has \emph{expected resurgence} \cite{GrifoHunekeMukundan}. Note that expected resurgence implies more than Stable Harbourne; in fact, given any integer $C$, if $I$ has expected resurgence then $I^{(hr-C)} \subseteq I^r$ for all $r \gg 0$ \cite[Remark 2.7]{GrifoStable}.

Throughout the paper, we will use some results that appear in the literature for the case where $\kk$ has characteristic $0$ that actually hold in more generality.

\begin{theorem}[\protect{\cite[Theorem 2.4]{FMX2018}}]\label{thm 2.4 fmx over any perfect field}
Let $\kk$ be a perfect field. Let $I$ be the defining ideal of a set $\XX$ of $s$ distinct points in $\PP^N_\kk$, and let $H$ be the defining ideal of $s$ generic points in $\PP^N_{\kk(\z)}$. Then $\alpha(H^{(m)}) \geqslant \alpha(I^{(m)})$ for each $m \geqslant 1$. Moreover, for each $m \geqslant 1$ there exists an open dense set $U_m \in \AA_\kk^{s(N+1)}$ where equality holds.
\end{theorem}

In \cite{FMX2018}, this is stated only in the case where $\kk$ has characteristic $0$. We include their argument here, but now adapting the necessary steps to obtain the more general result. We emphasize that this is precisely the same argument as in \cite{FMX2018}, but the key point that allows for this generalization is that when $\kk$ is a perfect field, the symbolic powers of radical ideals are given by differential powers.

\begin{proof}
	As before, we write
	$$P_i(\z) = [z_{i0}: \dots : z_{iN}] \in \PP^N_{\kk(\z)} \quad \text{ and } \quad \XX(\z) = \{P_1(\z), \dots, P_s(\z)\},$$
	so that $H$ is the ideal corresponding to the set $\{ P_1(\z), \ldots, P_s(\z)\}$. For each $\a \in \AA^{s(N+1)}$, we write $H(\a)$ for the ideal in $\kk[\PP^N_\kk]$ corresponding to $\{P_1(\a), \ldots, P_s(\a)\}$.
	
	Fix $m \geqslant 1$. First, we claim that for each $t \geqslant 1$,
	$$V_t := \lbrace \a \in \AA^{s(N+1)} \mid \alpha(H(\a)^{(m)}) \leqslant t \rbrace = \lbrace \a \in \AA^{s(N+1)} \mid \textrm{ there exists } f \in H(\a)^{(m)} \textrm{ of degree } t \rbrace$$
	is a closed subset of $\AA^{s(N+1)}$.
	
	By \cite[Proposition 2.14]{SurveySP}, the symbolic powers of any radical ideal in $\kk[\PP^N_\kk]$ are the differential powers of the given ideal. In particular, a polynomial $f$ satisfies $f \in H(\a)^{(m)}$ if and only if $D^{m-1} f(P_i(\a)) = 0$ for all $i$, where $D^{m-1}$ denotes the differential operators of order at most $m-1$. It is well known (see \cite[Th\'eor\'eme 16.11.2]{EGAIV}, and also \cite[Remark 2.7]{SurveySP}) that when $R = k[x_0, \ldots, x_N]$ for any field $k$ we have
	$$D^{m-1} = R \left< \frac{1}{\alpha_0!} \frac{\partial^{\alpha_0}}{\partial x_0^{\alpha_0}} \cdots \frac{1}{\alpha_N!} \frac{\partial^{\alpha_N}}{\partial x_N^{\alpha_N}} ~\Big|~ |\alpha| \leqslant m-1 \right>.$$
	For simplicity, we will write $D_{\alpha} = \frac{1}{\alpha_0!} \frac{\partial^{\alpha_0}}{\partial x_0^{\alpha_0}} \cdots \frac{1}{\alpha_N!} \frac{\partial^{\alpha_N}}{\partial x_N^{\alpha_N}}$. The factors $\frac{1}{\alpha_i!}$ do not represent elements in the field; $D_{\alpha}$ is a formal representation for the $k$-linear operator defined by $D_{\alpha}(C \underline{x}^\beta) = C{\beta \choose \alpha} \underline{x}^{\beta-\alpha}$ if $\beta_i \geqslant \alpha_i$ for all $i$, and otherwise $D_{\alpha}x(C \underline{x}^\beta) = 0$. We will abuse notation and write $D_{\alpha}$ for both the operator in $\kk[\PP^N_\kk]$ and for the one in $\kk(\z)[\PP^N_{\kk(\z)}]$, noting that the operator $D_{\alpha}$ over $\kk(\z)$ restricts to the one over $\kk$.
	
	Let $f \in R = \kk[\PP^N_\kk]$ be a homogeneous polynomial of degree $t$, and write $f = \sum_{|\alpha| = t} C_{\alpha} \underline{x}^{\alpha}$. By the description of symbolic powers above, $f \in H(\a)^{(m)}$ if and only if $D_\alpha f(P_i(\a)) = 0$ for all $|\alpha| \leqslant m-1$. Since $D_\alpha$ is $\kk$-linear, we can write $D_{\beta} f(P_i(\z)) = \sum_{|{\beta}| = t} C_{\alpha} D_{\beta} \z^{\alpha}_i$ and $D_{\beta} f(P_i(\a)) = \sum_{|\alpha| = t} C_{\alpha} D_{\beta} \a^{\alpha}_i$. Here $\z_i^{\alpha}$ denotes $z_{i0}^{\alpha_0} \cdots z_{iN}^{\alpha_0}$ and $\a_i^{\alpha}$ denotes $a_{i0}^{\alpha_0} \cdots a_{iN}^{\alpha_0}$. Moreover, since the $D_\beta$ are $\kk(\z)$-linear, $D_{\beta} f(P_i(\z))|_{\z = \a} = D_{\beta} f(P_i(\a))$.
	
	Let us write the system of equations $D_{\beta} f(P_i(\a)) = 0$, where $\beta$ ranges over all the tuples in $\NN^{N+1}$ with $|\beta| \leqslant m-1$, in matrix form. In order to do that, we use the deglex order in $\NN_0^{N+1}$, so $\alpha > \beta$ if and only if $|\alpha| > |\beta|$ or $|\alpha| = |\beta|$ and there exists $j$ such that $\alpha_i = \beta_i$ for $i \leqslant j$ and $\alpha_{j+1} > \beta_{j+1}$. Consider the system of equations
	$$\mathbb{B}_{m,t} \begin{bmatrix} C_{(t,0, \ldots, 0)} & \ldots & C_{\alpha} & \ldots & C_{(0, \ldots,0, t)} \end{bmatrix}= 0$$
 	where the rows of $\mathbb{B}_{m,t}$ are indexed by $1 \leqslant i \leqslant n$ and $|\beta| \leqslant m-1$, and the row corresponding to $i$ and $\beta$ is
 	$$\begin{bmatrix} D_{\beta} z_{i0}^t & \ldots & D_{\beta} \z_i^\alpha & \ldots & D_{\beta} z_{iN}^t \end{bmatrix}.$$
 	There exists a nonzero homogeneous polynomial $f \in H(\a)^{(m)}$ of degree $t$ if and only if there exists a nontrivial solution $C$ to the system of equations $[\mathbb{B}_{m,t}]_{\a} C = 0$, where 
 	$$[\mathbb{B}_{m,t}]_{\a} = \begin{bmatrix} D_{\beta} \z^\alpha|_{\z = \a}\end{bmatrix} = \begin{bmatrix} D_{\beta} \a^\alpha\end{bmatrix}.$$
 	If $\mathbb{B}_{m,t}$ has less rows than columns, there are nontrivial solutions to the homogeneous system $[\mathbb{B}_{m,t}]_{\a} C = 0$ for any $\a$, so $V_t = \AA^{s(N+1)}_\kk$ is closed in $\AA^{s(N+1)}_\kk$. Otherwise, there are nontrivial solutions $C$ to our system if and only if the rank of $[\mathbb{B}_{m,t}]_{\a}$ is strictly smaller than the number of columns. That is equivalent to requiring finitely many minors of $[\mathbb{B}_{m,t}]_{\a}$ to vanish, which is a closed condition on the values of $\a$. Therefore, $V_t$ is closed in $\AA^{s(N+1)}$.
 	
 	So we have shown that $V_t$
	is a closed subset of $\AA^{s(N+1)}$. Now fix $n = \alpha(H^{(m)})$. We claim that the closed set $V_n$ contains an open dense subset of $\AA^{s(N+1)}$, and thus must be all of $\AA^{s(N+1)}$. To see that, consider a homogeneous polynomial $0 \neq f \in H^{(m)}$ of degree $n$, and we may assume that it is an element of $\kk[\z][\x]$. By Remark \ref{rmk.KrullSpecialization}, there exists an open dense subset $U_m$ of $\AA^{s(N+1)}$ such that $\pi_\a\left(H^{(m)}\right) = H(\a)^{(m)}$ for all $\a \in U_m$. Moreover, perhaps after intersecting with another open dense set, we can also assume that the specialization of $f$ is nonzero for all $\a \in U_m$. In particular, $f(\a,\x)$ has degree $n$ for all $\a \in U_m$. This implies that $U_m \subseteq V_n$ and, therefore, $V_n = \AA^{s(N+1)}_\kk$.
	
	This shows that $\alpha(H^{(m)}) \geqslant \alpha(H(\a)^{(m)})$ for all $\a \in V_n = \AA^{s(N+1)}$. We also constructed an open dense set $U_m$ where equality holds, so this completes the proof of the theorem.
\end{proof}

\begin{theorem}[\protect{\cite[Theorem 2.7]{FMX2018}}]\label{chudnovsky generic points fmx}
	Let $\kk$ be an algebraically closed field of arbitrary characteristic. The defining ideal $I(\z)$ of $s$ generic points in $\PP^N_{\kk(\z)}$ satisfies Chudnovsky's Conjecture, meaning
	$$\alpha(I(\z)) \geqslant \frac{\alpha(I) + N - 1}{N}.$$
\end{theorem}

In \cite{FMX2018}, this is stated only in the case where $\kk$ has characteristic $0$. The only step in their proof where characteristic $0$ is used was when applying their \cite[Theorem 2.4]{FMX2018}, which as we wrote above holds more generally. The other results they use in this proof are \cite[Proposition 2.5 (a)]{FMX2018}, which has no assumptions on the characteristic of $\kk$, and \cite[Proposition 2.9]{GHM2013}, which holds over any algebraically closed field.
%
%\begin{proof}
%	By \cite[Proposition 2.5 (a)]{FMX2018}, which has no assumptions on the characteristic of $\kk$, we may assume that $s = {\beta + N - 1 \choose N}$ for some $\beta \geqslant 1$. If $I_\textrm{star}$ is the defining ideal of $s$ points in $\PP^N_{\kk}$ forming a star configuration, then $\alpha(I_\textrm{star}) = \alpha(I(\z)) = \beta$ \cite[Proposition 2.9]{GHM2013}. Now by \cite[Theorem 2.4]{FMX2018} as stated here in Theorem \ref{thm 2.4 fmx over any perfect field}, for all $m \geqslant 1$ we have
%	$$\frac{\alpha(I(\z)^{(m)}}{m} \geqslant \frac{\alpha(I_{\textrm{star}}^{(m)})}{m} \geqslant \frac{\alpha(I_{\textrm{star}}) + N - 1}{N} = \frac{\alpha(I(\z)) + N - 1}{N}.$$
%\end{proof}

%%%%%%%%%%%%%%%%%%%%%%%%%%%%%%%%%%%%
%From one containment to a stable containment
%%%%%%%%%%%%%%%%%%%%%%%%%%%%%%%%%%%%

\section{From one containment to a stable containment} \label{sec.stable}

In this section, we prove our first main result, which establishes the stable containment from one containment. Our theorem is in the same spirit as that of Theorem \ref{thm.Grifo}, but the containments we now study include an appropriate power of the maximal ideal on the right hand side. This result also forms an essential step in proving Chudnovsky's Conjecture and the stable Harbourne--Huneke containment for a general set of points, which we will cover in later sections.

\begin{theorem} \label{thm.14all}
Let $\ell(x,y) = ax+by+d \in \ZZ[x,y]$ be a linear form. Let $I \subseteq \kk[\PP^N_\kk]$ be an ideal of big height $h$. Suppose that for some value $c \in \NN$, $I^{(hc-h)} \subseteq \mm^{hc-h-\ell(h,c)}I^c$. Then, for all $r \gg 0$, provided that $\alpha(I) \geqslant h^2(a+1)+hd+1$, we have
$$I^{(hr-h)} \subseteq \mm^{hr-h-\ell(h,r)}I^r.$$
\end{theorem}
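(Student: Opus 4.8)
The plan is to bootstrap from the single containment at level $c$ to all large $r$ by the same "additivity" mechanism that powers Theorem~\ref{thm.Grifo}, now tracking the extra power of $\mm$ on the right-hand side. First I would write $r = qc + e$ with $0 \leqslant e < c$ and $q \gg 0$, and split $I^{(hr-h)}$ using the Johnson containment (Theorem~\ref{thm.Johnson}): one wants to peel off $q$ copies of $I^{(hc-h)}$ and absorb the remainder $I^{(he + (\text{something}))}$ via a cruder containment, such as the Ein--Lazarsfeld--Smith/Hochster--Huneke bound $I^{(hj)} \subseteq I^j$ together with a degree estimate on $I^{(t)}$ for the leftover $t$. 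Applying the hypothesis $I^{(hc-h)} \subseteq \mm^{hc-h-\ell(h,c)} I^c$ to each of the $q$ factors gives $I^{(hr-h)} \subseteq \mm^{q(hc-h-\ell(h,c))} I^{qc} \cdot (\text{leftover factor})$, and then I need to (i) check the leftover factor lands in $I^{e}$ (so the $I$-powers combine to $I^r$), and (ii) verify the accumulated power of $\mm$ is at least $hr-h-\ell(h,r)$.

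Step (ii) is where the linearity of $\ell$ and the affine bookkeeping matters: since $\ell(h,y) = (ah+b)y + (ah+d)$ is linear in $y$, the quantity $hy - h - \ell(h,y)$ is also linear in $y$, say $\lambda y + \mu$ with $\lambda = h - ah - b$ and $\mu = -h - ah - d$. What the hypothesis gives at level $c$ is a power $\lambda c + \mu$ of $\mm$; multiplying $q$ copies yields $q(\lambda c + \mu)$, and I need this (plus whatever $\mm$-power comes free from the leftover piece and from $\alpha$-estimates) to dominate $\lambda r + \mu = \lambda(qc+e) + \mu = q(\lambda c) + \lambda e + \mu$. So the deficit to make up is roughly $\lambda e - \mu(q-1)$, i.e.\ a bounded quantity in $e$ against a term growing linearly in $q$ — provided $\mu \leqslant 0$, equivalently $h + ah + d \geqslant 0$, which should follow from the hypothesis $\alpha(I) \geqslant h^2(a+1) + hd + 1$ combined with $\alpha(I) \leqslant$ (something like $hN$ or a Macaulay-type bound) forcing the relevant sign. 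The extra power of $\mm$ needed to close the bounded gap comes from the leftover symbolic power: if $t := (hr-h) - q(hc-h)$ is the leftover symbolic degree, then $I^{(t)} \subseteq \mm^{\alpha(I^{(t)})} \supseteq$ — more precisely $I^{(t)}$ is generated in degree $\geqslant \alpha(I^{(t)}) \geqslant \lceil t \ahat(I) \rceil$, and using the trivial bound $\ahat(I) \geqslant \alpha(I)/h$ (Waldschmidt--Skoda) one gets $I^{(t)} \subseteq \mm^{\lceil t\alpha(I)/h\rceil}$; this is exactly where the hypothesis $\alpha(I) \geqslant h^2(a+1)+hd+1$ is calibrated so that the $\mm$-power harvested from the leftover beats $\lambda e - \mu$.

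Concretely, the steps in order: (1) fix $q$ large, set $r = qc+e$, and record the linear function $g(y) = hy - h - \ell(h,y)$; (2) apply Theorem~\ref{thm.Johnson} to factor $I^{(hr-h)}$ as $\big(I^{(hc-h)}\big)^{q} \cdot I^{(t)}$ up to the inclusion $I^{(A+B)} \subseteq I^{(A)} I^{(B)}$-type splitting, choosing the leftover exponent $t$ so that $t \geqslant h e$ or similar, ensuring $I^{(t)} \subseteq I^{e}$; (3) substitute the hypothesis $q$ times to get $\mm^{q \cdot g(c)} I^{qc} I^{(t)} \subseteq \mm^{q\cdot g(c)} I^r$; (4) bound $I^{(t)} \subseteq \mm^{\lceil t \alpha(I)/h \rceil}$ and hence the left side sits inside $\mm^{q\cdot g(c) + \lceil t\alpha(I)/h\rceil} I^r$; (5) do the linear arithmetic to check $q\cdot g(c) + \lceil t\alpha(I)/h\rceil \geqslant g(r)$ for all $q \gg 0$, which reduces to a single linear inequality in $q$ whose leading coefficient is positive precisely because of the stated lower bound on $\alpha(I)$.

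The main obstacle I anticipate is Step (2)/(4): getting the leftover symbolic power's exponent $t$ to simultaneously (a) be large enough that $I^{(t)} \subseteq I^{e}$ via Theorem~\ref{thm.Johnson} (this needs $t \geqslant he$, but the "natural" leftover from $hr - h - q(hc-h) = h(r - qc) - h + qh = he + (q-1)h$ — wait, one must be careful that the $-h$ terms don't accumulate the wrong way), and (b) not so large that demanding $I^{(t)} \subseteq \mm^{\lceil t\alpha/h\rceil}$ overshoots and the inequality in (5) still closes. Balancing these two is the delicate part, and it is exactly the role of the somewhat mysterious threshold $h^2(a+1) + hd + 1$ on $\alpha(I)$ — I would expect the proof to choose $e$ and $t$ explicitly (e.g.\ $t = he + h - 1$ or $t = h(e+1)$) and then verify the threshold is the sharp constant making everything fit for $r \geqslant hc$ or thereabouts, mirroring the "$r \geqslant hc$" conclusion of Theorem~\ref{thm.Grifo}.
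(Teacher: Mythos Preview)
Your overall strategy---bootstrap via Johnson's theorem, apply the hypothesis to each factor, then do linear bookkeeping on the $\mm$-exponent---is exactly right, and your identification of the threshold $\alpha(I) \geqslant h^2(a+1)+hd+1$ as the sharp constant is correct. But there is a genuine gap in steps (3)--(4): you use the leftover factor $I^{(t)}$ \emph{twice}. In step (3) you invoke $I^{(t)} \subseteq I^{e}$ to upgrade $I^{qc}I^{(t)}$ to $I^r$, and then in step (4) you separately invoke $I^{(t)} \subseteq \mm^{\lceil t\alpha(I)/h\rceil}$ to harvest extra $\mm$-power. But $I^{(t)} \subseteq I^e$ and $I^{(t)} \subseteq \mm^M$ do not combine to give $I^{(t)} \subseteq \mm^M I^e$; you would need to route this through a degree-vs-max-generating-degree argument, which drags in $\omega(I^e)$ and is not available here. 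This is not a cosmetic issue: with only $q$ applications of the hypothesis, the accumulated $\mm$-power $q\cdot g(c)$ falls short of $g(r)$ by roughly $(q-1)(h+ah+d)$, so you genuinely need the extra $\mm$-power from the leftover, and you must extract it \emph{simultaneously} with the $I^e$ factor.

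The paper sidesteps this entirely by a different division with remainder: write $r = q(hc) + t$ with $0 \leqslant t < hc$ (dividing by $hc$, not $c$), and apply Johnson with $k = qh+q+t-1$, taking $qh$ copies of $a_i = hc-h-1$ and the remaining $q+t-1$ copies equal to $0$. This yields
\[
I^{(hr-h)} \subseteq \big(I^{(hc-h)}\big)^{qh} I^{q+t-1} \subseteq \mm^{qh(hc-h-\ell(h,c))} I^{qhc} \cdot I^{q+t-1} = \mm^{qh(hc-h-\ell(h,c))} I^{q-1} I^{r}.
\]
The point is that the overshoot is now an \emph{ordinary} power $I^{q-1}$, which converts cleanly to $\mm^{(q-1)\alpha(I)}$ with no loss. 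The arithmetic then reduces to checking that $qh(hc-h-\ell(h,c)) + (q-1)\alpha(I) \geqslant hr-h-\ell(h,r)$, i.e.\ $(q-1)\alpha(I) - q[h^2(a+1)+hd] \geqslant$ (bounded in $t$), which holds for $q \gg 0$ precisely when $\alpha(I) \geqslant h^2(a+1)+hd+1$. Your framework can be repaired to match this---if in your setup you instead use $I^{(t)} \subseteq I^{e+s}$ with $s \approx q/h$ and then $I^{e+s} \subseteq \mm^{s\alpha(I)} I^e$, you recover the same threshold---but the paper's choice of dividing by $hc$ makes the overshoot appear automatically and avoids the double-counting trap.
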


\begin{proof} For any $r \in \NN$, write $r = qhc+t$, where $q \in \NN$ and $0 \leqslant t \leqslant hc-1$. Applying Theorem \ref{thm.Johnson} for $k = qh+q+t-1$, $a_1 = \dots = a_{qh} = hc-h-1$ and $a_{qh+1} = \dots = a_k = 0$, we have
\begin{align*}
I^{(hr-h)} & \subseteq \big(I^{(hc-h)}\big)^{qh}I^{q+t-1} \\
& \subseteq \big(\mm^{hc-h-\ell(h,c)}I^c\big)^{qh}I^{q+t-1} \\
& = \mm^{(hc-h-\ell(h,c))qh}I^{q-1}I^{qhc+t} \\
& \subseteq \mm^{(hc-h-\ell(h,c))qh}\mm^{\alpha(I)(q-1)}I^r.
\end{align*}
Thus, it suffices to prove that, for $\alpha(I) \geqslant h^2(a+1)+hd+1$ and $r \gg 0$,
$$(hc-h-\ell(h,c))qh+\alpha(I)(q-1) \geqslant hr-h-\ell(h,r).$$
This is equivalent to showing that
\begin{align} \label{eq.largeQ}
\alpha(I)(q-1)-q[h^2(a+1)+hd] \geqslant ht-h-ah-bt-d.
\end{align}
Note that the right hand side is bounded, so the inequality holds for $\alpha(I) \geqslant h^2(a+1)+hd+1$ and $q \gg 0$.
\end{proof}

Particular interesting consequences of Theorem \ref{thm.14all} include the following choices of $\ell(x,y)$.

\begin{corollary} \label{cor.stableChud}
Let $b \in \ZZ$. Suppose that for some value $c \in \NN$, $I^{(hc-h)} \subseteq \mm^{c(h-b)}I^c$. For all $r \gg 0$, we have
$$I^{(hr-h)} \subseteq \mm^{r(h-b)}I^r.$$
\end{corollary}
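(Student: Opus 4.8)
\textbf{Proof proposal for Corollary \ref{cor.stableChud}.}

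The plan is to deduce this immediately from Theorem \ref{thm.14all} by choosing the linear form $\ell$ appropriately. I want the exponent $hc - h - \ell(h,c)$ on the right-hand side of the hypothesis to equal $c(h-b)$, and simultaneously the exponent $hr - h - \ell(h,r)$ in the conclusion to equal $r(h-b)$; since both of these must hold with the \emph{same} $\ell$, I should look for a single linear form $\ell(x,y) = ax + by + d \in \ZZ[x,y]$ with $xy - x - \ell(x,y) = y(x-b)$ as a polynomial identity in $x,y$ — then specializing $(x,y) = (h,c)$ and $(x,y) = (h,r)$ gives exactly what is needed. Expanding, $xy - x - ax - by - d = xy - bx$, so I need $-x - ax = -bx$ and $-by - d = 0$ for all $x,y$; this forces $d = 0$, $b_{\ell} = 0$ (the $y$-coefficient of $\ell$), and $a = b - 1$. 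So the correct choice is $\ell(x,y) = (b-1)x$, i.e. $a = b-1$, $b_{\ell} = 0$, $d = 0$.

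Next I must check the side hypothesis of Theorem \ref{thm.14all}, namely $\alpha(I) \geqslant h^2(a+1) + hd + 1$. With our choice $a = b-1$ and $d = 0$ this reads $\alpha(I) \geqslant h^2 b + 1$. However, the statement of Corollary \ref{cor.stableChud} as written carries no hypothesis on $\alpha(I)$ — so either this inequality must be automatic, or (more likely) it is being suppressed because in the intended application $b$ is taken to be $\leqslant 0$ (e.g. $b = -(N-1)+h = 1$ giving $\mm^{m}$-type statements, or $b \leqslant 0$), in which case $h^2 b + 1 \leqslant 1 \leqslant \alpha(I)$ holds trivially for any ideal with $\alpha(I) \geqslant 1$. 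I would state the corollary with the implicit understanding (or an explicit clause) that $\alpha(I) \geqslant h^2 b + 1$, which is vacuous when $b \leqslant 0$; alternatively one notes that for the interesting values of $b$ arising from Chudnovsky's Conjecture the condition is harmless. Having fixed $\ell(x,y) = (b-1)x$, Theorem \ref{thm.14all} applied with this $\ell$ and the given $c$ yields, for all $r \gg 0$, the containment $I^{(hr-h)} \subseteq \mm^{\,hr - h - \ell(h,r)} I^r = \mm^{\,r(h-b)} I^r$, which is exactly the assertion.

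The only genuinely delicate point — the ``main obstacle'' — is the bookkeeping around the $\alpha(I)$ hypothesis: one has to be careful about the sign of $b$ and whether the corollary is meant to be unconditional. Everything else is a one-line substitution. I would therefore write the proof as: ``Apply Theorem \ref{thm.14all} with $\ell(x,y) = (b-1)x$; then $hc - h - \ell(h,c) = c(h-b)$ and $hr - h - \ell(h,r) = r(h-b)$, and the numerical hypothesis $\alpha(I) \geqslant h^2(a+1) + hd + 1 = h^2 b + 1$ holds since $b \leqslant 0$ in the cases of interest (and is to be assumed otherwise).''
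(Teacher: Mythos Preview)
Your overall strategy---apply Theorem \ref{thm.14all} with a suitable linear form $\ell$---is exactly the paper's approach, but there is a genuine algebra error in your choice of $\ell$. You want $xy - x - \ell(x,y) = y(x-b)$, and you expand the right-hand side as $xy - bx$; but $y(x-b) = xy - by$, not $xy - bx$. Matching coefficients correctly in
\[
xy - x - ax - b_\ell y - d \;=\; xy - by
\]
gives $a = -1$, $b_\ell = b$, $d = 0$, i.e.\ $\ell(x,y) = -x + by$, which is the paper's choice. With your choice $\ell(x,y) = (b-1)x$ one computes $hr - h - \ell(h,r) = hr - bh = h(r-b)$, which is \emph{not} $r(h-b)$ in general, so the conclusion of Theorem \ref{thm.14all} would not even be the containment you want.

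This error is also precisely the source of your difficulty with the side hypothesis. With the correct $\ell(x,y) = -x + by$ one has $a = -1$ and $d = 0$, so the condition $\alpha(I) \geqslant h^2(a+1) + hd + 1$ reads $\alpha(I) \geqslant 1$, which is automatic. Thus the corollary is genuinely unconditional, and your discussion suggesting an implicit hypothesis ``$\alpha(I) \geqslant h^2 b + 1$'' (and the hedge that the interesting $b$ are $\leqslant 0$) is both unnecessary and incorrect: the paper's applications in fact use $b = 1$ and $b = 2$.
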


\begin{proof} 
Pick $\ell(x,y) = -x+by$. Note that, in this case, the bound $\alpha(I) \geqslant h^2(a+1)+hd+1$ in Theorem \ref{thm.14all} is $\alpha(I) \geqslant 1$, which is trivially satisfied. The assertion now follows directly from Theorem \ref{thm.14all}.
\end{proof}

\begin{remark}
	\label{rmk.largeR}
	The bound for large values of $r$ in Corollary \ref{cor.stableChud} can be taken to be independent of $I$; for instance, in the proof of Theorem \ref{thm.14all}, one can choose $q \geqslant (h-b)(hc-1)+1$ so that the inequality (\ref{eq.largeQ}) holds, and it follows that the containment hold for $r\geqslant hc[(h-b)(hc-1)+1]$.
\end{remark}

\begin{corollary} Let $b \in \ZZ$. Suppose that for some value $c \in \NN$, $I^{(hc-h)} \subseteq \mm^{(c-1)(h-b)}I^c$. For all $r \gg 0$ and $\alpha(I) \geqslant h^2-hb+1$, we have
$$I^{(hr-h)} \subseteq \mm^{(r-1)(h-b)}I^r.$$
\end{corollary}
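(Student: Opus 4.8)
The plan is to deduce this corollary from Theorem~\ref{thm.14all} by choosing the linear form $\ell(x,y)$ appropriately, exactly as was done for Corollary~\ref{cor.stableChud}. The right-hand side exponent we want in the conclusion is $hr-h-\ell(h,r) = (r-1)(h-b) = rh - rb - h + b$, so we need $\ell(h,r) = hr - h - (rh - rb - h + b) = rb - b = b(r-1)$. In the variables $(x,y)$ this says $\ell(x,y) = by - b = -b + by$, i.e. $a = 0$, $d = -b$, and the coefficient of $y$ is $b$. Note this is independent of $h$, which is what makes the substitution clean.

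With this choice, I would first check that the hypothesis of Theorem~\ref{thm.14all}, namely $I^{(hc-h)} \subseteq \mm^{hc-h-\ell(h,c)}I^c$, is precisely the stated hypothesis $I^{(hc-h)} \subseteq \mm^{(c-1)(h-b)}I^c$: indeed $hc - h - \ell(h,c) = hc - h - b(c-1) = (c-1)h - b(c-1) = (c-1)(h-b)$. Next I would compute the degree bound: Theorem~\ref{thm.14all} requires $\alpha(I) \geqslant h^2(a+1) + hd + 1 = h^2(0+1) + h(-b) + 1 = h^2 - hb + 1$, which is exactly the hypothesis $\alpha(I) \geqslant h^2 - hb + 1$ in the corollary. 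Finally, the conclusion of Theorem~\ref{thm.14all}, $I^{(hr-h)} \subseteq \mm^{hr-h-\ell(h,r)}I^r$, becomes $I^{(hr-h)} \subseteq \mm^{(r-1)(h-b)}I^r$ by the same identity $hr - h - \ell(h,r) = (r-1)(h-b)$, which is what we want.

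So the proof is essentially a one-line invocation: pick $\ell(x,y) = -b + by$ (equivalently $by - b$), observe that with this choice all three of the hypothesis, the degree restriction, and the conclusion of Theorem~\ref{thm.14all} translate verbatim into the statement of the corollary, and apply that theorem. There is no real obstacle here; the only thing to be careful about is the bookkeeping of signs in the identity $hc - h - b(c-1) = (c-1)(h-b)$ and making sure the coefficient extraction $\ell(h,r) = b(r-1)$ is done correctly so that $a$, $b$, $d$ in the notation of Theorem~\ref{thm.14all} are identified as $a = 0$, coefficient of $y$ equal to $b$, and $d = -b$. (A minor notational wrinkle: the symbol $b$ in the corollary plays the role of the coefficient of $y$ in $\ell$, which happens to also be called $b$ in the statement of Theorem~\ref{thm.14all}, so the two usages are consistent.)
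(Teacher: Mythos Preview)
Your proof is correct and takes exactly the same approach as the paper: the paper's proof is simply ``Pick $\ell(x,y) = by-b$. The assertion follows from Theorem~\ref{thm.14all}.'' Your careful verification that the hypothesis, the degree bound $\alpha(I)\geqslant h^2(a+1)+hd+1 = h^2-hb+1$, and the conclusion all match up is precisely the bookkeeping that the paper leaves implicit.
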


\begin{proof} Pick $\ell(x,y) = by-b$. The assertion follows from Theorem \ref{thm.14all}.
\end{proof}

\begin{corollary} \label{cor.HaHu1}
Suppose that $\text{char } \kk = 0$. If for some value $c \in \NN$ and $\alpha(I) \geqslant h^2+1$, $I^{(hc-h)} \subseteq \mm^{hc-h-c}I^c$ then for all $r \gg 0$, we have
$$I^{(hr-h+1)} \subseteq \mm^{(r-1)(h-1)}I^r.$$
\end{corollary}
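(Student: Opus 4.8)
The plan is to deduce Corollary \ref{cor.HaHu1} from the already-established results by combining the stable containment from Theorem \ref{thm.14all} with Johnson's containment (Theorem \ref{thm.Johnson}) to shift from the symbolic power $I^{(hr-h)}$ to $I^{(hr-h+1)}$. First I would apply Theorem \ref{thm.14all} with the linear form $\ell(x,y) = y$, so that $a = 0$, $b = 1$, $d = 0$, and $hc - h - \ell(h,c) = hc - h - c$; the hypothesis $I^{(hc-h)} \subseteq \mm^{hc-h-c}I^c$ is then exactly the hypothesis of Theorem \ref{thm.14all}, and the numerical condition $\alpha(I) \geqslant h^2(a+1) + hd + 1 = h^2 + 1$ is what we have assumed. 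This gives, for all $r \gg 0$,
$$I^{(hr-h)} \subseteq \mm^{hr-h-r}I^r.$$

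Next I would upgrade the left-hand exponent from $hr-h$ to $hr-h+1$. The natural tool is Theorem \ref{thm.Johnson}: since $hr - h + 1 = h(r) + (1-h) + \cdots$, we need to write $hr-h+1$ in the form $hk + a_1 + \cdots + a_k$ in a way that produces $I^{(hr-h)}$ times an extra factor of $I$ (or a symbolic power that contains $I^{(1)} = I$). Concretely, for $r \geqslant 2$ one can apply Theorem \ref{thm.Johnson} with $k = r$, with $a_1 = \cdots = a_{r-1} = h - 1$ and $a_r = h - 1$ adjusted so that $hk + \sum a_i = hr - h + 1$; more simply, observe that $I^{(hr-h+1)} \subseteq I^{(hr-h)}$ automatically (symbolic powers are decreasing), but this alone loses the extra generator degree we need on the right. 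The cleaner route is: $hr - h + 1 = h(r-1) + \big((h-1)\cdot(r-1) - (r-2)\big) + \ldots$ — rather than fiddling, use Theorem \ref{thm.Johnson} with $k$ chosen so that $I^{(hr-h+1)} \subseteq I^{(hr-h)} \cdot I^{(1)}$ is not quite right; instead write $I^{(hr-h+1)} \subseteq (I^{(hc-h)})^{?}\cdots$ directly mirroring the proof of Theorem \ref{thm.14all} but with $k$ increased by one so that one more factor of $I$ appears. Each extra factor of $I$ contributes $\mm^{\alpha(I)} \supseteq \mm^{h^2+1}$, and since $\mm^{(r-1)(h-1)}I^r$ needs exponent $(r-1)(h-1) = hr - h - r + 1$, which exceeds $hr - h - r$ by exactly $1$, the single extra copy of $\mm^{\alpha(I)}$ with $\alpha(I) \geqslant h^2 + 1 \geqslant 2$ more than covers the deficit for $r \gg 0$. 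Finally, the role of $\text{char } \kk = 0$ is presumably to ensure $I^{(hr-h+1)} = \pp_1^{hr-h+1} \cap \cdots$ behaves as expected (or to invoke a symbolic-power manipulation valid in characteristic zero), so I would cite the relevant fact about symbolic powers of radical ideals in equicharacteristic zero at that point.

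In carrying this out the order would be: (1) invoke Theorem \ref{thm.14all} with $\ell(x,y) = y$ to get $I^{(hr-h)} \subseteq \mm^{hr-h-r}I^r$ for $r \gg 0$; (2) use Theorem \ref{thm.Johnson} (in the form used inside the proof of Theorem \ref{thm.14all}, with $k$ one larger) to write $I^{(hr-h+1)} \subseteq (I^{(hc-h)})^{qh} I^{q+t}$ for $r = qhc + t$, picking up one additional factor of $I$ compared to the $I^{(hr-h)}$ case; (3) substitute the hypothesis $I^{(hc-h)} \subseteq \mm^{hc-h-c}I^c$ and bound the resulting $\mm$-exponent, using $\alpha(I) \geqslant h^2+1$ to absorb the extra $+1$ in the target exponent $(r-1)(h-1)$; (4) conclude the inequality of exponents holds for $q \gg 0$, hence for $r \gg 0$. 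The main obstacle I expect is step (2)–(3): getting the bookkeeping of the Johnson decomposition exactly right so that the extra symbolic/ordinary factor lands as a clean extra $I$ (not, say, an $I^{(2)}$ with the wrong contribution), and then verifying that the arithmetic of the $\mm$-exponents closes with the $+1$ discrepancy between $hr - h - r$ and $(r-1)(h-1)$ — this is exactly the kind of linear-in-$q$ versus bounded-in-$t$ comparison that appears in the proof of Theorem \ref{thm.14all}, so it should go through, but the precise choice of $k$, $a_1, \ldots, a_k$ and the check that $\alpha(I) \geqslant h^2+1$ (rather than something larger) suffices is where the care is needed. I would also double-check whether the characteristic-zero hypothesis is genuinely needed here or is inherited from an auxiliary lemma about symbolic powers; if the former, I would isolate precisely where it enters.
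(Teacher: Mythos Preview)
Your first step is exactly right and matches the paper: apply Theorem \ref{thm.14all} with $\ell(x,y)=y$ (so $a=0$, $b=1$, $d=0$, and the bound $\alpha(I)\geqslant h^2+1$ is precisely the needed one) to obtain
\[
I^{(hr-h)} \subseteq \mm^{hr-h-r} I^r \quad \text{for all } r \gg 0.
\]

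The gap is in your second step. Your plan to rerun the Johnson decomposition ``with $k$ one larger'' does not work: taking $k'=k+1$ with the same $a_i$'s gives $hk' + \sum a_i = hr$, so Johnson yields $I^{(hr)} \subseteq (I^{(hc-h)})^{qh} I^{q+t}$, and since $I^{(hr-h+1)} \supseteq I^{(hr)}$ this inclusion goes the wrong way for what you want. If instead you keep $k$ fixed and bump one $a_i$ from $0$ to $1$ so that $hk+\sum a_i = hr-h+1$, Johnson produces a factor $I^{(2)}$ rather than an extra copy of $I$; to extract any additional $\mm$-power from that $I^{(2)}$ you would need $I^{(2)} \subseteq \mm I$, which is already the characteristic-zero input you were hoping to avoid. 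So the bookkeeping you flagged as the obstacle is a genuine one, not just care.

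The paper bypasses all of this with the single fact you guessed was lurking behind the $\operatorname{char}\kk=0$ hypothesis: for a homogeneous radical ideal in characteristic zero one has $I^{(n+1)} \subseteq \mm\, I^{(n)}$ for every $n$ (this is the standard consequence of the Euler/Zariski--Nagata differential description of symbolic powers). Applying it once gives
\[
I^{(hr-h+1)} \subseteq \mm\, I^{(hr-h)} \subseteq \mm \cdot \mm^{hr-h-r} I^r = \mm^{(r-1)(h-1)} I^r,
\]
since $hr-h-r+1=(r-1)(h-1)$. That is the entire second half of the argument; no further use of Johnson or of the bound on $\alpha(I)$ is needed beyond what already went into step (1).
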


\begin{proof} 
By applying Theorem \ref{thm.14all} with $\ell(x,y) = y$, it follows that for all $r \gg 0$ and $\alpha(I) \geqslant h^2+1$, we have
$$I^{(hr-h)} \subseteq \mm^{hr-h-r}I^r.$$
Moreover, since $\text{char } \kk = 0$, we have $I^{(hr-h+1)} \subseteq \mm I^{(hr-h)}$. The assertion now follows.
\end{proof}

\begin{remark}
	\label{rmk.largeR2}
	The bound for large values of $r$ in Corollary \ref{cor.HaHu1} can be taken to be dependent of only $\alpha(I)$, not the actual ideal $I$. This again follows from (\ref{eq.largeQ}).
\end{remark}

Theorem \ref{thm.14all} allows us to establish the stable containment for special configurations of points for which the containment in low powers is known to fail.

\begin{example}[Fermat configurations]\label{example fermat}
	The ideal $I = \left( x(y^n-z^n), y(z^n-x^n), z(x^n-y^n) \right)$ in $\kk[x,y,z]$ corresponds to a Fermat configuration of $n^2+3$ points in $\mathbb{P}^2$, where $\kk$ is a field of characteristic not $2$ and containing $n \geqslant 3$ distinct $n$-th roots of unity; see \cite{JustynaFermat} for more on Fermat configurations. This was the first class found of ideals that fail $I^{(hn-h+1)} \subseteq I^n$ for some $n$ --- more precisely, these ideals fail $I^{(3)} \subseteq I^2$ \cite{counterexamples,HaSeFermat}. Using Corollary \ref{cor.stableChud} with $b=1$, we can now establish the stable Harbourne--Huneke containment for these ideals:
	\begin{itemize}
		\item $I^{(2r-1)} \subseteq \mm^{r-1}I^r$ for $r \gg 0$, and
		\item $I^{(2r)} \subseteq \mm^{r}I^r$ for $r \gg 0$.
	\end{itemize}	
	In fact, we will show a stronger containment holds, namely, $I^{(2r-2)} \subseteq \mm^r I^r$ for all $r \gg 0$. 	By \cite[Theorem 2.1]{DHNSST2015}, the resurgence of any of the Fermat ideals $I$ above is $\rho(I) = \frac{3}{2}$. This guarantees that $I^{(2c-2)} \subseteq I^c$ whenever $\frac{2c-2}{c} > \frac{3}{2}$ or, equivalently, $c \geqslant 5$. Moreover, note that $I$ is generated by elements of degree $n+1$, and for all $m \geqslant 1$  
	$$I^{(m)} = (x^n-y^n,y^n-z^n)^m \cap (x,y)^m \cap (x,z)^m \cap (y,z)^m.$$
	In particular,
	$$\alpha(I^{(10)}) \geqslant \alpha \left( (x^n-y^n,y^n-z^n)^{10} \right)= 10n.$$
	Notice that $\alpha(I)=\omega(I)=n+1$, where $\omega(I)$ is the maximal degree of a generator of $I$. Since $n \geqslant 3$, we have $10n \geqslant 6n + 12$ (which is the generating degree of $\mm^6 I^6$), and thus $I^{(10)} \subseteq \mm^6 I^6$. Hence, by Corollary \ref{cor.stableChud}, $I^{(2r-2)} \subseteq \mm^r I^r$ for all $r \gg 0$. 
\end{example}

\begin{example}[B\"or\"oczky configuration $B_{12}$] \label{example.Bor}
	Let $I$ be the defining ideal of the B\"or\"oczky configuration $B_{12}$ of 19 triple points in $\mathbb{P}^2$, as described in \cite[Figure 1]{DHNSST2015}. By \cite[Theorem 2.2]{DHNSST2015}, $\alpha(I) = \omega(I)= 5$ and $\rho(I) = 3/2$, which as before implies that $I^{(2c-2)} \subseteq I^c$ for $c \geqslant 5$. Moreover, in the proof of \cite[Theorem 2.2]{DHNSST2015}, it is shown that $\alpha(I^{(m)}) \geqslant 4m$ for all $m$, which in particular implies that $\alpha(I^{(8)}) \geqslant 32$. Since $\omega(I^5) = 5 \cdot \omega(I) = 25$, we conclude that $I^{(8)} \subseteq \mm^7 I^5$, and by Corollary \ref{cor.stableChud} this guarantees that $I^{(2r-2)} \subseteq \mm^r I^r$ for all $r \gg 0$. Once again, this establishes the stable Harbourne--Huneke containment for $I$:
	\begin{itemize}
		\item $I^{(2r-1)} \subseteq \mm^{r-1}I^r$ for $r \gg 0$, and
		\item $I^{(2r)} \subseteq \mm^{r}I^r$ for $r \gg 0$.
	\end{itemize}
\end{example}

\begin{example}\label{DrabkinSeceleanu}
Any finite group $G$ generated by pseudoreflections determines an arrangement $\mathcal{A} \subseteq \mathbb{C}^{\textrm{rank} (G)}$ of hyperplanes, where each hyperplane is fixed pointwise by one of the pseudoreflections in $G$. Given such an arrangement $\mathcal{A}$, it turns out that the symbolic powers of the radical ideal $I = J(\mathcal{A})$ determining the singular locus of $\mathcal{A}$ are very interesting. In fact, several of the counterexamples to Harbourne's Conjecture that appear in the literature, such as the Fermat \cite{counterexamples,HaSeFermat}, Klein and Wiman configurations \cite{Klein,Wiman,Seceleanu}, turn out to arise in this form. Drabkin and Seceleanu studied this class of ideals \cite{DrabkinSeceleanu}, and in particular completely classified which $I$ among these satisfy the containment $I^{(3)} \subseteq I^2$.

Fix such an ideal $I$, which has pure height $2$, and assume that $G$ is an irreducible reflection group of rank three. By \cite[Proposition 6.3]{DrabkinSeceleanu}, $I^{(2r-1)} \subseteq I^r$ for all $r \geqslant 3$. We claim that in fact $I^{(2r-2)} \subseteq \mm^r I^r$ for $r \gg 0$. To check that, we can make small adaptations to the proof of \cite[Proposition 6.3]{DrabkinSeceleanu}. First, note that the case when $G = G(m,m,3)$ leads to the Fermat configurations, and we have shown our claim in Example \ref{example fermat}. Otherwise, note that is enough to show that $\alpha(I^{(2r-2)}) \geqslant \reg(I^r) + r$ for $r \gg 0$, using \cite[Lemma 2.3.4]{BoH}. On the one hand, by the proof of \cite[Proposition 6.3]{DrabkinSeceleanu}, $\alpha(I^{(2r-2)}) \geqslant (2r-2) \alpha(I) - 2 (2r-3)$; on the other hand, $\reg(I^r) = (r+1)\alpha(I)-2$ for $r \geqslant 2$, by \cite[Theorem 2.5]{NagelSeceleanu}. We are thus done whenever 
$$(2r-2) \alpha(I) - 2 (2r-3) \geqslant (r+1)\alpha(I)-2 + r,$$
or equivalently,
$$\alpha(I) \geqslant 5 + \frac{7}{r-3}.$$
As in \cite[Proposition 6.3]{DrabkinSeceleanu}, this holds for $r \geqslant 10$ as long as $G \notin \lbrace A_3, D_3, B_3, G(3,1,3), G_{25} \rbrace$, since in that case $\alpha(I) \geqslant 6$.

Finally, Macaulay2 \cite{M2} computations show the following:
\begin{enumerate}
	\item When $G = A_3$, $I^{(8)} \subseteq \mm^5 I^5$.
	\item When $G = D_3$, $I^{(8)} \subseteq \mm^5 I^5$.
	\item When $G = B_3$, $I^{(4)} \subseteq \mm^3 I^3$.
	\item When $G = G(3,1,3)$, $I^{(4)} \subseteq \mm^3 I^3$.
	\item When $G = G_{25}$, $I^{(4)} \subseteq \mm^3 I^3$.
\end{enumerate}

By Corollary \ref{cor.stableChud}, we conclude that $I^{(2r-2)} \subseteq \mm^r I^r$ for $r \gg 0$, which gives us the stable Harbourne--Huneke containment for $I$:
	\begin{itemize}
		\item $I^{(2r-1)} \subseteq \mm^{r-1}I^r$ for $r \gg 0$, and
		\item $I^{(2r)} \subseteq \mm^{r}I^r$ for $r \gg 0$.
	\end{itemize}

Finally, we automatically obtain $I^{(2r-1)} \subseteq \mm^r I^r$ for $r \gg 0$, and as we will see in Proposition \ref{pro.HaHu}, that implies $\ahat(I) \geqslant \frac{\alpha(I)+1}{2}$.
\end{example}

%%%%%%%%%%%%%%%%%%%%%%%%%%%%%%%%%%%%%

\section{Stable Harbourne and Harbourne--Huneke Containment for Points} \label{sec.HH}

In this section, we establish the stable Harbourne conjecture and the stable Harbourne--Huneke containment for a general set of points. We shall begin with the stable Harbourne conjecture, Conjecture \ref{conj.Ha}. Throughout the section, we make the assumption that $\kk$ is an algebraically closed field of arbitrary characteristic.

To say that Stable Harbourne or the Stable Harbourne--Huneke Conjectures hold for $s$ general points in $\PP^N_\kk$ is to find an open dense set $U$ of the Hilbert scheme of $s$ points in $\PP^N_\kk$ such that for all $\XX \in U$, there exists $r(\XX)$ depending on $\XX$ such that the defining ideal $I = I(\XX)$ satisfies the corresponding stable containment for all $r \geqslant r(\XX)$. However, in the case of the Stable Harbourne--Huneke Conjectures, we will show something stronger: that there exists a constant $r(s,N)$ depending only on $s$ and $N$ and an open dense subset $U$ of the Hilbert scheme of $s$ points in $\PP^N_\kk$ such that all $\XX \in U$, $I = I(\XX)$ satisfies
	$$I^{(Nr)} \subseteq \mm^{r(N-1)}I^r \textrm{ and } I^{(Nr-N+1)} \subseteq \mm^{(r-1)(N-1)}I^r$$
	for all $r \geqslant r(s,N)$. For each of these stable containment conjectures, one might wonder whether there exists a uniform bound $r(R)$, depending only on the ring $R$ but not on $I$, such that all radical ideals $I$ in $R$ satisfy the corresponding stable containment for all $r \geqslant r(R)$. 
	
	While we do not know whether such a uniform bound exists, we remark on a potential approach that fails: that we cannot uniformly bound the resurgence away from $N$, or more generally the big height of $I$.

%We will show that general sets of points have expected resurgence. To prove that, we first prove the following containment for generic sets of points. 

\begin{lemma}\label{lem.comtainment.gen.Exp.Resurgence}
Assume that $N \geqslant 3$. Let $I(\z)$ be the defining ideal of $s$ generic points in $\PP^N_{\kk(\z)}$. There exists a constant $r(s,N)$, depending only on $s$ and $N$, such that for all $r \geqslant r(s,N)$, we have
$$I(\z)^{(Nr-N+1)} \subseteq\mm I(\z)^r.$$	
\end{lemma}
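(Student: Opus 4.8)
The plan is to reduce the statement to a degree (Hilbert-function) estimate for generic points, which is available in the literature. First I would recall from \cite[Lemma 2.3.4]{BoH} (or its analogue) that a containment of the form $I^{(a)} \subseteq \mm^t I^r$ follows once one knows the degree inequality $\alpha\big(I^{(a)}\big) \geqslant \reg(I^r) + t$; indeed, if a form $f$ of degree $< \reg(I^r)+t$ lies in $I^{(a)}$, then dividing by a general linear form iteratively shows $f \in \mm^t I^r$, so it is enough to control $\alpha\big(I(\z)^{(Nr-N+1)}\big)$ from below and $\reg\big(I(\z)^r\big)$ from above. Both are governed by the fact that $\XX(\z)$ is a set of $s$ points in generic position in $\PP^N_{\kk(\z)}$, hence has the maximal Hilbert function.

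Next I would pin down the two quantities. For the regularity of ordinary powers of the ideal of generic points, one uses the known asymptotic/explicit formula: for $r$ large, $\reg\big(I(\z)^r\big) = r\,\omega(I(\z)) + \text{const}$, and since for generic points $\alpha(I(\z)) = \omega(I(\z))$ is the least integer $d$ with $\binom{d+N}{N} > s$, this gives $\reg\big(I(\z)^r\big) \leqslant r\alpha(I(\z)) + O(1)$ (one can invoke results on regularity of powers of ideals of general points, e.g.\ the linearity of $\reg$ in $r$ together with an explicit leading coefficient). For the lower bound on $\alpha$ of the symbolic power, I would use a known estimate for generic points — essentially that $\alpha\big(I(\z)^{(m)}\big) \geqslant \frac{m}{N}\big(\alpha(I(\z)) + N - 1\big) - O(1)$ type bounds, or more directly a bound of the shape $\alpha\big(I(\z)^{(m)}\big) \geqslant \frac{m+N-1}{N}\,\alpha(I(\z)) - c$, valid because the Waldschmidt constant of generic points is at least $\frac{\alpha(I)+N-1}{N}$ up to the relevant correction. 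Plugging $m = Nr - N + 1$, the left side grows like $\big(r - \tfrac{N-2}{N}\big)\alpha(I(\z)) + O(1) \approx r\,\alpha(I(\z)) + O(1)$, which must beat $\reg\big(I(\z)^r\big) + 1 \approx r\,\alpha(I(\z)) + O(1)$.

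The delicate point, and the main obstacle, is that the two leading terms $r\,\alpha(I(\z))$ cancel, so the inequality $\alpha\big(I(\z)^{(Nr-N+1)}\big) \geqslant \reg\big(I(\z)^r\big) + 1$ hinges on the lower-order terms and on the precise constants in both the Waldschmidt-type lower bound and the regularity formula. I expect one needs the hypothesis $N \geqslant 3$ precisely here: the correction terms in the symbolic-power degree bound for generic points behave well enough only when $N \geqslant 3$ (in $\PP^2$ the bound is tight and the argument would fail), and one must be careful that the number of points $s$ is arbitrary, so the estimates used must be uniform in $s$. I would therefore spend the bulk of the proof making these two asymptotic estimates explicit — invoking the sharpest available statements on $\alpha$ of symbolic powers of generic points and on $\reg$ of powers of ideals of general points — and then checking that for $r \gg 0$ the $O(1)$ terms line up in the right direction. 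Once the degree inequality $\alpha\big(I(\z)^{(Nr-N+1)}\big) \geqslant \reg\big(I(\z)^r\big) + 1$ is secured for $r \gg 0$, the containment $I(\z)^{(Nr-N+1)} \subseteq \mm\, I(\z)^r$ follows from the standard linear-form-quotient argument, completing the proof.
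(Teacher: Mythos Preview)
Your overall strategy --- reducing to a degree comparison between $\alpha\big(I(\z)^{(Nr-N+1)}\big)$ and a regularity-type quantity, then invoking a containment criterion --- is exactly what the paper does. However, you overcomplicate the execution and miscalculate the leading terms, which makes you anticipate a difficulty that is not actually there.

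The paper proceeds as follows. First, it invokes the separate fact (from \cite[Remark~2.3]{YuXieStefan}) that $I(\z)^{(Nr-N+1)} \subseteq I(\z)^r$ for $r \gg 0$; this sidesteps any need to control $\reg\big(I(\z)^r\big)$ or its asymptotic linearity constant. Second, with $\binom{N+d-1}{N} \leqslant s < \binom{N+d}{N}$ one has $\alpha(I(\z)) = d$ and $\reg(I(\z)) \leqslant d+1$, so to upgrade the containment by one power of $\mm$ it suffices to show $\alpha\big(I(\z)^{(Nr-N+1)}\big) \geqslant r(d+1)+1$ (note: $r\,\reg(I)$, not $\reg(I^r)$). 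Third --- and this is the input you are vague about --- the paper uses \cite[Theorem~2.7]{FMX2018}, which gives the full Chudnovsky bound $\alpha\big(I(\z)^{(m)}\big) \geqslant m\,\dfrac{\alpha(I(\z))+N-1}{N}$ for \emph{generic} points. Plugging $m = Nr-N+1$ gives a lower bound with leading term $r(d+N-1)$, to be compared against $r(d+1)$. These leading terms do \emph{not} cancel: their difference is $r(N-2)$, which for $N \geqslant 3$ is positive and growing. The inequality then holds for all $r$ beyond an explicit threshold, and that is precisely where the hypothesis $N \geqslant 3$ enters --- at leading order, not in lower-order terms.

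Two minor corrections to your sketch: for $s$ points in generic position one has $\omega(I) \in \{d, d+1\}$, so $\alpha = \omega$ need not hold; and the bound shape $\alpha\big(I^{(m)}\big) \geqslant \frac{m+N-1}{N}\,\alpha(I) - c$ you mention is not the relevant one --- you need the Chudnovsky-type bound with $\alpha(I)+N-1$ in the numerator, which is strictly stronger and is exactly what makes the leading-order comparison succeed without any delicate analysis.
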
	

\begin{proof}
For simplicity of notation, we shall write $I$ for $I(\z)$ in this proof. Let $d$ be such that 
$${{ N+d-1}\choose N}  \leqslant s < {{ N+d}\choose N}.$$
Then, $\alpha(I)=d$ and $\reg(I) \leqslant d+1$, by \cite[Lemma 5.8]{MN2001} and \cite[Corollary 1.6]{GM1984}. Now, by \cite[Remark 2.3]{YuXieStefan}, we have
  $$I^{(Nr-N+1)} \subseteq I^r \textrm{ for } r \gg 0.$$
Note that throughout \cite{YuXieStefan}, $\kk$ is assumed to have characteristic $0$, but their argument carries through independently of the characteristic of $\kk$. On the other hand, by \cite[Lemma 2.3.4]{BoH}, to show that $I^{(Nr-N+1)} \subseteq I^r$ it is sufficient to prove $\alpha(I^{(Nr-N+1)} ) \geqslant r \reg(I)$. And indeed, we will prove that there exists a constant $r(s,N)$ such that for all $r \geqslant r(s,N)$,
$$ \alpha( I^{(Nr-N+1)} ) \geqslant r \reg(I) +1,$$
which then implies that $I^{(Nr-N+1)} \subseteq \mm I^r$. 
First, note that it follows from \cite[Theorem 2.7]{FMX2018}, as stated in Theorem \ref{chudnovsky generic points fmx}, that
$$\alpha(I^{(Nr-N+1)}) \geqslant (Nr-N+1) \, \frac{\alpha(I)+N-1}{N} = \frac{(Nr-N+1)(d+N-1)}{N}.$$
We claim that if $r \geqslant r(s,N) = \frac{N+(N-1)(d+N-1) }{N(N-2)}$ then 
$$(Nr-N+1) \, \frac{(d+N-1 )}{N} \geqslant r(d+1)+1 \geqslant r \reg(I)+1.$$
Suppose not. Then,
\begin{align*}
(Nr-N+1)\, \frac{(d+N-1 )}{N} &< r(d+1)+1\\
(Nr-N+1)(d+N-1) & < rN(d+1)+N\\
(N-2)rN-(N-1)(d+N-1) & < N\\
r & < \frac{N+(N-1)(d+N-1) }{N(N-2)}.\\
\end{align*} 
This contradicts our assumption on $r$, and thus $\alpha( I^{(Nr-N+1)} ) \geqslant r \reg(I) +1$, resulting in the containment, 
$$I^{(Nr-N+1)} \subseteq\mm I^r$$
for all $r \geqslant r(s,N)$.
\end{proof}	

Next we prove that an ideal defining a set of general points has expected resurgence, which we will use to prove that general sets of points satisfy the Stable Harbourne Conjecture, and which is also interesting in its own right. 

\begin{theorem}\label{thm.exp.resurgence.general}
Suppose that $N \geqslant 3$. Then the defining ideal of a general set of points in $\PP^N_\kk$ has expected resurgence.
\end{theorem}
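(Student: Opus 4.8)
The plan is to follow the paper's own recipe: establish one sufficiently strong containment for the \emph{generic} points, specialize just that containment, and then upgrade it to the inequality $\rho(I) < N$, so that no infinite intersection of open sets is ever needed.

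First I would bound the resurgence of the generic points. Let $I(\z)$ be the defining ideal of $s$ generic points in $\PP^N_{\kk(\z)}$ as in Lemma~\ref{lem.comtainment.gen.Exp.Resurgence}, and write $d = \alpha(I(\z))$. The proof of that lemma already supplies the ingredients I need, valid for \emph{all} powers: the Waldschmidt-type estimate $\alpha(I(\z)^{(m)}) \geqslant m\,\tfrac{d+N-1}{N}$ (by \cite[Theorem 2.7]{FMX2018}) and the regularity bound $\reg(I(\z)^{r}) \leqslant r(d+1)$ (from $\reg(I(\z)) \leqslant d+1$, by \cite[Lemma 5.8]{MN2001} and \cite[Corollary 1.6]{GM1984}). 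Together with the standard criterion that $I^{(a)} \subseteq I^{b}$ as soon as $\alpha(I^{(a)}) \geqslant \reg(I^{b})$ (see, e.g., \cite[Lemma 2.3.4]{BoH}), these give $I(\z)^{(a)} \subseteq I(\z)^{b}$ whenever $a \geqslant \tfrac{N(d+1)}{d+N-1}\,b$, and hence
$$\rho\bigl(I(\z)\bigr) \;\leqslant\; \frac{N(d+1)}{d+N-1} \;<\; N,$$
where the strict inequality is exactly equivalent to $N > 2$; this is the single place where the hypothesis $N \geqslant 3$ is used. In particular, fixing $c$ large enough that $\tfrac{Nc-N}{c} > \tfrac{N(d+1)}{d+N-1}$, we obtain the one containment $I(\z)^{(Nc-N)} \subseteq I(\z)^{c}$.

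Next I would specialize just this containment. By the specialization results recalled in Section~\ref{sec.prel}, for this fixed $c$ there is an open dense $W' \subseteq \AA^{s(N+1)}_{\kk}$ with $\pi_{\a}(I(\z)^{(Nc-N)}) = I(\a)^{(Nc-N)}$ and $\pi_{\a}(I(\z)^{c}) = I(\a)^{c}$ for every $\a \in W'$; since $\pi_{\a}$ preserves inclusions, $I(\a)^{(Nc-N)} \subseteq I(\a)^{c}$ for all $\a \in W'$. It then remains to promote this single containment, for a fixed $\a \in W'$, to expected resurgence of $I := I(\a)$. For this I would iterate Johnson's containment (Theorem~\ref{thm.Johnson}): taking $k$ factors all equal to $I^{(Nc-N)}$ gives $I^{(k(Nc-1))} \subseteq I^{kc}$ for every $k \geqslant 1$; a bounded correction (one extra Johnson factor together with $I^{(Nb)} \subseteq I^{b}$) takes care of the powers $b$ that are not multiples of $c$, and $I^{(Nb)} \subseteq I^{b}$ itself handles the small powers. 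Collecting cases bounds $\rho(I) \leqslant N - \tfrac{1}{(N+1)c-1} < N$, so $I(\a)$ has expected resurgence; Lemma~\ref{lem.Hilbert} then gives the theorem.

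The step I expect to be the genuine obstacle is the last one. A single containment is a weak hypothesis: its most obvious consequence, Grifo's stable containment $I^{(Nr-N)} \subseteq I^{r}$ for all $r \geqslant Nc$ (Theorem~\ref{thm.Grifo}), only produces ratios $\tfrac{Nr-N}{r} \to N$ and does \emph{not} on its own force a gap below $N$. What makes the argument work is that the Johnson-bootstrapped family $I^{(k(Nc-1))} \subseteq I^{kc}$ has ratio $N - \tfrac1c$, uniformly bounded away from $N$; the technical heart is then the bookkeeping for non-multiples of $c$ and small powers, checking that none of those cases pushes the supremum back up to $N$. This passage from one containment to expected resurgence is close in spirit to, and could presumably be deduced from, results of \cite{GrifoHunekeMukundan}.
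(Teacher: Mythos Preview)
Your argument is correct, and it follows the paper's high-level recipe (one containment for generic points, specialize once, then upgrade to $\rho < N$), but the key lemma you invoke at each end differs from the paper's choice.

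The paper uses Lemma~\ref{lem.comtainment.gen.Exp.Resurgence} to get the $\mm$-enhanced containment $I(\z)^{(Nc-N+1)} \subseteq \mm I(\z)^c$, specializes that, and then applies \cite[Theorem~3.3]{GrifoHunekeMukundan} as a black box: that result says exactly that a single containment of the form $I^{(hc-h+1)} \subseteq \mm I^c$ forces $\rho(I) < h$, and the $\mm$ is essential for that citation. You instead extract the plain containment $I(\z)^{(Nc-N)} \subseteq I(\z)^c$ (really the Bocci--Harbourne bound $\rho(I(\z)) \leqslant \reg(I(\z))/\ahat(I(\z))$ fed the same inputs as in Lemma~\ref{lem.comtainment.gen.Exp.Resurgence}), specialize it, and then run the Johnson bootstrap by hand. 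Your bookkeeping is sound: the family $I^{(k(Nc-1))} \subseteq I^{kc}$ has ratio $N-\tfrac{1}{c}$ uniformly, and interpolating to $b = kc+t$ via one extra Johnson factor (for $k \geqslant N-1$) or via $I^{(Nb)} \subseteq I^b$ (for $b < (N-1)c$) keeps every non-containment ratio at most $N - \tfrac{1}{Nc-1}$, in fact slightly better than the bound you quote. What you gain is self-containment---you are essentially reproving a variant of \cite[Theorem~3.3]{GrifoHunekeMukundan} in place; what the paper gains is brevity, since the $\mm$-enhanced containment lets it cite that theorem directly.

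One small wrinkle worth flagging: the inequality $\reg(I(\z)^r) \leqslant r(d+1)$ does not follow from the references you give for $\reg(I(\z)) \leqslant d+1$; you additionally need $\reg(I^r) \leqslant r\,\reg(I)$ for ideals of points, which is classical (see, e.g., \cite{BoH}) but should be cited separately.
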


\begin{proof}
Let $I(\z) $ be the defining ideal of $s$ generic points in $\PP^N_{\kk(\z)}$. By Lemma \ref{lem.comtainment.gen.Exp.Resurgence}, there is a constant $c$ such that,
  $$I(\z)^{(Nc-N+1)} \subseteq\mm I(\z)^c .$$ 	
It follows from \cite[Satz 2 and 3]{Krull1948} that there exists an open dense subset $U \subseteq \AA^{s(N+1)}$ such that for all $\a \in U$,
$$\pi_\a(I(\z)^{(Nc-N)}) = I(\a)^{(Nc-N)} \text{ and } \pi_\a(I(\z)^c) = I(\a)^c.$$
Thus, for all $a \in U$, we have 
  $$I(\a)^{(Nc-N+1)} \subseteq\mm I(\a)^c.$$  
Hence, by \cite[Theorem 3.3]{GrifoHunekeMukundan}, $\rho(I(\a)) < N$.
\end{proof}

We now remark that the proof of Theorem \ref{thm.exp.resurgence.general} actually says something stronger: there exists a constant $\rho(s,N)$ depending only on $s$ and $N$ such that $\rho(I(\XX)) \leqslant \rho(s,N)$ for all $\XX \in U$. 

\begin{remark}\label{upper bound resurgence}
	Suppose $I$ is an ideal of points in $\PP^N_\kk$, or more generally a radical ideal of big height $N$ whose symbolic powers are given by saturations with $\mm$, meaning $I^{(n)} = ( I^n : \mm^\infty)$ for all $n$. If $I$ satisfies $I^{(Nc-N+1)} \subseteq \mm I^c $ for some fixed $c$, the inequality $\rho(I(\a)) < N$ is obtained in \cite{GrifoHunekeMukundan} in the following way:
\begin{enumerate}[1)]
	\item As a corollary of \cite[Main Theorem]{Sw1}, there exists a constant $l$, possibly depending on $I$, such that $\mm^{ln} I^{(n)} \subseteq I^n$ for all $n \geqslant 1$.
	\item There exists a constant $k$ such that $\overline{I^{n+k}} \subseteq I^n$ for all $n \geqslant 1$. While the proof of \cite[Theorem 2.6]{GrifoHunekeMukundan} says that this constant $k$ may depend on $I$, \cite[Theorem 4.13]{HunekeUniform} states that $k$ can be taken to be independent of $I$. This is known as the Uniform Brian\c{c}on-Skoda Theorem. In fact, $k$ can be taken to be $N$, by \cite[Remark 4.14]{HunekeUniform}
	\item In \cite[Theorem 3.2]{GrifoHunekeMukundan}, it is shown that the assumption implies that $I^{(cn)} \subseteq \mm^{\lfloor\frac{n}{c}\rfloor} I^n$ for all $n \geqslant c$, which then allows us to use \cite[Theorem 2.6]{GrifoHunekeMukundan}.
	\item In \cite[Theorem 2.6]{GrifoHunekeMukundan}, constants $t < r$ depending on $c$, $l$, $k$, and $N$ are constructed such that $\rho(I)<\frac{tN}{r}$ (cf. \cite[Lemma 2.5]{GrifoHunekeMukundan}).
\end{enumerate}

Ultimately, the bound obtained for $\rho(I)$ depends on $k$, which depends only on $N$, on $c$, which by Lemma \ref{lem.comtainment.gen.Exp.Resurgence} depends only on $N$ and $s$, and on $l$. From what we have said so far, only $l$ might depend on $I$, and the remaining constants depend only on $s$ and $N$. Note, however, that when $I$ defines a set of points in $\kk[\PP^N_\kk]$, \cite[Theorem 1.1]{GeramitaGimiglianoPitteloud} gives $\reg(I^n) \leqslant n \reg(I)$. Since the saturation degree of $I^n$ is at most $\reg(I^n)$, this says in particular that $I^{(n)}$ and $I^n$ coincide in degrees above $n \reg(I)$. Thus, if we take $l = \reg(I)$, we have $\mm^{ln} I^{(n)} \subseteq (I^{(n)})_{\geqslant \reg(I) n} = (I^n)_{\geqslant \reg(I) n} \subseteq I^n$. Finally, we conclude that our bound on $\rho(I)$ depends on $s$, $N$, and $\reg(I)$.

On the other hand, the regularity specializes by, for example, \cite[Theorem 4.2]{NhiTrung1999}. Therefore, the open dense subset $U$ in the proof of Theorem \ref{thm.exp.resurgence.general} can be further chosen such that for all $\a \in U$, $\reg(I(\a)) = \reg(I(\z))$ is independent of the points. Hence, Theorem \ref{thm.exp.resurgence.general} says in fact that there is an open dense set $U \subseteq G(1,s,N+1)$ where the resurgence is uniformly bounded away from $N$, meaning there exists a constant $\rho(s,N)$ depending only on $s$ and $N$ such that $\rho(I(\a)) \leqslant \rho(s,N)$ for all $\a \in U$. 

One might wonder if more generally we can bound the resurgence uniformly away from the big height $h$ for all radical ideals $I$. The answer is no: Bocci and Harbourne give an example (constructed by Ein) in \cite[Lemma 2.4.3(b)]{BoH} of a sequence of ideals $I_n$ of big height $h$ such that $\rho(I_n) \xrightarrow{n \rightarrow \infty} h$. In particular, these examples include sets of points $\XX_n$ in $\PP^{N}_\kk$ such that $\rho(I(\XX)) \xrightarrow{n \rightarrow \infty} N$ \cite[Lemma 2.4.3(a)]{BoH}. However, the number of points $s_n$ in $\XX_n$ also grows with $n$. We also note that these examples still satisfy Stable Harbourne --- in fact, they satisfy the original conjecture of Harbourne's \cite[Example 8.4.8]{Seshadri}.
\end{remark}

As a consequence, we conclude that a general set of points satisfies the Stable Harbourne Conjecture and, particularly, we obtain the following stable containment:

\begin{corollary}\label{cor.gen.Ha}
	Assume that $N \geqslant 3$. Let $I$ denote either the ideal of $s$ generic points in $\PP^N_{\kk(\z)}$ or the ideal of $s$ general points in $\PP^N$. For any given integer $C$ and all $r \gg 0$, we have
	$$I^{(rN-C)} \subseteq I^r.$$
\end{corollary}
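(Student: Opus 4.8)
The plan is to deduce Corollary~\ref{cor.gen.Ha} directly from Theorem~\ref{thm.exp.resurgence.general} together with the general fact, recorded in \cite[Remark 2.7]{GrifoStable}, that expected resurgence implies the stronger ``linear offset'' containments $I^{(hr-C)} \subseteq I^r$ for all $r \gg 0$. Here the big height is $h = N$ since the ideals in question define reduced sets of points in $\PP^N$, so $\rho(I) < N$ is exactly what ``expected resurgence'' means. First I would handle the case of $s$ general points in $\PP^N$: Theorem~\ref{thm.exp.resurgence.general} gives $\rho(I(\a)) < N$ for all $\a$ in an open dense subset of $\AA^{s(N+1)}$, and by Lemma~\ref{lem.Hilbert} this is precisely the statement that a general set of $s$ points has expected resurgence; then \cite[Remark 2.7]{GrifoStable} yields $I^{(rN-C)} \subseteq I^r$ for $r \gg 0$.

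For the generic points $I(\z) \subseteq \kk(\z)[\x]$, I would run the same argument intrinsically rather than specializing: the proof of Theorem~\ref{thm.exp.resurgence.general} actually establishes $I(\z)^{(Nc-N+1)} \subseteq \mm_\z I(\z)^c$ for some constant $c$ via Lemma~\ref{lem.comtainment.gen.Exp.Resurgence}, which in particular gives $I(\z)^{(Nc-N+1)} \subseteq I(\z)^c$. By \cite[Theorem 3.3]{GrifoHunekeMukundan} (applied over the field $\kk(\z)$), this one containment forces $\rho(I(\z)) < N$, i.e.\ $I(\z)$ has expected resurgence as well. Again invoking \cite[Remark 2.7]{GrifoStable} over $\kk(\z)$ gives $I(\z)^{(rN-C)} \subseteq I(\z)^r$ for all $r \gg 0$, completing both cases.

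I do not anticipate a serious obstacle: the corollary is essentially a packaging of Theorem~\ref{thm.exp.resurgence.general} plus a citation. The only point requiring a small amount of care is confirming that \cite[Theorem 3.3]{GrifoHunekeMukundan} and \cite[Remark 2.7]{GrifoStable} apply verbatim over the non-algebraically-closed, non-finitely-generated field $\kk(\z)$ --- but these results hold for any radical ideal of a polynomial ring over a field, so the generic-points case goes through unchanged, and the general-points case is reduced to the honest field $\kk$ by Lemma~\ref{lem.Hilbert} anyway.
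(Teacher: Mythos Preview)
Your proposal is correct and matches the paper's own proof essentially verbatim: for generic points you invoke Lemma~\ref{lem.comtainment.gen.Exp.Resurgence} and \cite[Theorem 3.3]{GrifoHunekeMukundan} to get $\rho(I(\z))<N$, for general points you invoke Theorem~\ref{thm.exp.resurgence.general}, and in both cases you conclude via \cite[Remark 2.7]{GrifoStable}. The only difference is cosmetic---your appeal to Lemma~\ref{lem.Hilbert} is unnecessary since Theorem~\ref{thm.exp.resurgence.general} is already stated for general points.
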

	         
\begin{proof}
In the case of generic points, Lemma \ref{lem.comtainment.gen.Exp.Resurgence} gives $c$ such that $I^{(cN-N+1)} \subseteq \mm I^c$, which by \cite[Theorem 3.3]{GrifoHunekeMukundan} implies $\rho(I) < N$. In the case of general points, Theorem \ref{thm.exp.resurgence.general} says that $\rho(I) < N$. The stable containment $I^{(rN-C)} \subseteq I^r$ follows immediately (cf. \cite[Remark 2.7]{GrifoStable}).
\end{proof}

Again, in the case of general points we showed something a bit stronger.

\begin{remark}
	In fact, as detailed in \cite[Remark 2.7]{GrifoStable}, for each $C$, the containment $I^{(rN-C)} \subseteq I^r$ holds for all $r \geqslant \frac{C}{N-\rho(I)}$. As we discussed in Remark \ref{upper bound resurgence}, there is an open dense set $U \subseteq G(1,s,N+1)$ where the resurgence is uniformly bounded away from $N$. So if $\rho(s,N)$ is that uniform bound, every $\XX \in U$ satisfies $I^{(rN-C)} \subseteq I^r$ for all $r \geqslant \frac{C}{N-\rho(s,N)}$. Once we fix $s$ and $N$, this bound depends only on the constant $C$. 
\end{remark}

We will now continue to work towards establishing the stable Harbourne--Huneke containment in Conjecture \ref{conj.HaHu}. We will need a few lemmas, where the underlying ideas come from those used in \cite[Lemma 3 and Theorem 4]{DTG2017} (see also \cite[Lemma 3.1]{MSS2018}).

\begin{lemma} \label{lem.count}
The inequality
	$$k^N \leqslant {Nk-N-1 \choose N}$$
holds in the following cases:
\begin{enumerate}
	\item $k \geqslant 5$ and $N \geqslant 3$,
	\item $k \geqslant 4$ and $N \geqslant 4$, and 
	\item $k \geqslant 3$ and $N \geqslant 9$.
\end{enumerate}
\end{lemma}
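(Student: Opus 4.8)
The plan is to verify the inequality $k^N \leqslant \binom{Nk-N-1}{N}$ by a combination of direct base-case checks and an induction argument that handles the growth in $k$ and in $N$ separately. The key observation is that both sides are easy to compare once we rewrite the binomial coefficient as a product: $\binom{Nk-N-1}{N} = \frac{1}{N!}\prod_{j=0}^{N-1}(Nk-N-1-j)$, and each factor $Nk-N-1-j$ for $0 \leqslant j \leqslant N-1$ lies between $Nk-2N$ and $Nk-N-1$. So it suffices to show $\prod_{j=0}^{N-1}(Nk-N-1-j) \geqslant N!\,k^N$, and a natural sufficient condition is that each factor is at least $(N!)^{1/N}k$; more practically, one compares the two sides factor by factor after pairing $k^N = \prod_{j=0}^{N-1} k$.

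First I would dispose of the smallest cases by hand (or by citing a short Macaulay2/direct computation): $k=5, N=3$ gives $125 \leqslant \binom{11}{3} = 165$; $k=4, N=4$ gives $256 \leqslant \binom{11}{4} = 330$; $k=3, N=9$ gives $19683 \leqslant \binom{17}{9} = 24310$. Then I would induct on $k$ for each fixed $N$ in the relevant range: assuming $k^N \leqslant \binom{Nk-N-1}{N}$, one shows $(k+1)^N \leqslant \binom{N(k+1)-N-1}{N} = \binom{Nk-1}{N}$. The ratio of the right-hand sides is $\frac{\binom{Nk-1}{N}}{\binom{Nk-N-1}{N}} = \prod_{i=1}^{N}\frac{Nk-i}{Nk-N-i}$, each of whose $N$ factors exceeds $\frac{Nk-N}{Nk-2N} \cdot$ (something), while the ratio of left-hand sides is $\left(\frac{k+1}{k}\right)^N$; one checks the former dominates the latter in the stated range, which reduces to an elementary inequality in $k$ and $N$. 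This takes care of increasing $k$ once the base case for the minimal $k$ at that $N$ is known.

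To pass between the three cases — i.e.\ to lower the threshold on $k$ as $N$ grows — I would argue that for fixed $k$, the quantity $\binom{Nk-N-1}{N} / k^N$ is eventually increasing in $N$, so that once the inequality holds at some $N_0$ it persists for all larger $N$; combined with the base cases this closes the remaining gaps (e.g.\ $k=4$ needs $N \geqslant 4$, and $k=3$ needs $N\geqslant 9$, which one verifies fails at $N=8$ and below — indeed $3^8 = 6561 > \binom{15}{8} = 6435$, consistent with the threshold being sharp). The comparison in $N$ is again a factor-by-factor estimate: $\binom{Nk-N-1}{N}$ has $N$ factors each of size roughly $(k-1)N$, so it grows like $\frac{((k-1)N)^N}{N!} \sim \frac{((k-1)N)^N}{(N/e)^N\sqrt{2\pi N}}$, which for $k \geqslant 3$ beats $k^N$ for $N$ large.

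The main obstacle I anticipate is making the induction step in $k$ genuinely uniform and clean across the different $(k,N)$ regimes without resorting to a large finite case analysis — the inequality is true but not by a wide margin near the thresholds (as the near-equalities above show), so the estimates have to be done with some care rather than with crude bounds like "each binomial factor is at least $k$". In practice the cleanest writeup is probably to prove a single master inequality, something like "for $N \geqslant 3$ and $k$ satisfying $k(N-2) \geqslant N+1$ (plus the small exceptions), each factor $Nk-N-1-j \geqslant k \cdot (\text{ratio})$ with the product of ratios exceeding $N!$", and then observe the three itemized cases are exactly the instances of this that also clear the finitely many boundary checks.
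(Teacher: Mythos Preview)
Your proposal is correct in outline and close in spirit to the paper's argument, but the paper handles the two variables in the opposite order and thereby avoids exactly the difficulty you flag. The paper first divides through by $k^N$, rewriting the inequality as
\[
\left(N - \tfrac{N+1}{k}\right)\left(N - \tfrac{N+2}{k}\right)\cdots\left(N - \tfrac{2N}{k}\right) \geqslant N!.
\]
Each factor on the left is visibly increasing in $k$, so the reduction to the minimal $k$ in each case ($k=5$, $4$, $3$) is immediate --- no induction on $k$ is needed at all. What remains is a single induction on $N$ at that fixed $k$: one checks the base case (your same computations: $\binom{11}{3}\geqslant 5^3$, $\binom{11}{4}\geqslant 4^4$, $\binom{17}{9}\geqslant 3^9$) and then verifies that the ratio $\binom{(k-1)(N+1)-1}{N+1}\big/\binom{(k-1)N-1}{N}$ is at least $k$, which is a short rational-function estimate.

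Incidentally, your anticipated obstacle is a phantom: your induction step in $k$ amounts to $\prod_{i=1}^N\bigl(1+\tfrac{N}{Nk-N-i}\bigr) \geqslant \bigl(1+\tfrac{1}{k}\bigr)^N$, and each factor on the left already exceeds $1+\tfrac{1}{k}$ since $Nk \geqslant Nk-N-i$. So your route works too; the paper's division trick just makes the $k$-reduction a one-liner and leaves only the $N$-induction, which is the part you described only asymptotically (``eventually increasing'') but which the paper carries out from the base value of $N$ onward with an explicit ratio bound.
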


\begin{proof} 
(1) The assertion is equivalent to
$$(kN-N-1) \dots (kN-2N) \geqslant k^N N!,$$
i.e.,
$$\left(N - \dfrac{N+1}{k}\right) \dots \left(N - \dfrac{2N}{k}\right) \geqslant N!.$$
Since the left hand side increases with $k$, to prove (1) it suffices to establish the inequality when $k = 5$. Equivalently, we need to show that ${4N-1 \choose N} \geqslant 5^N$.

Notice also that for $N \geqslant 3$, we have
$$\dfrac{{4(N+1)-1 \choose N+1}}{{4N-1 \choose N}} = \dfrac{(4N+3)(4N+2)(4N+1)(4N)}{(N+1)(3N+2)(3N+1)(3N)} \geqslant \dfrac{(4N+3)(4N)}{(N+1)(3N)} \geqslant 5.$$

Thus, it is enough to show that ${4N-1 \choose N} \geqslant 5^N$ for $N = 3$, which is simply $165 = {11 \choose 3} \geqslant 125 = 5^3$. %This establishes (1).

(2) Similarly, it is enough to show that ${3N-1 \choose N} \geqslant 4^N$ for $k \geqslant 4$ and $N \geqslant 4$. This is indeed true since ${11 \choose 4} >4^4$ and 
	$$\dfrac{{3(N+1)-1 \choose (N+1)}}{{3N-1 \choose N}} = \dfrac{(3N+2)(3N+1)(3N)}{(N+1)(2N+1)(2N)} = \dfrac{3(3N+2)(3N+1)}{2(N+1)(2N+1)} \geqslant 4.$$

(3) Again, by the same line of arguments, it suffices to show that ${2N-1 \choose N} \geqslant 3^N$ for $k \geqslant 3$ and $N\geqslant 9$. This holds since ${17 \choose 9} > 3^9$ and 
  	$$\dfrac{{2(N+1)-1 \choose (N+1)}}{{2N-1 \choose N}}=\dfrac{(2N+1)(2N)}{(N+1)N}=\dfrac{2(2N+1)}{N+1}\geqslant 3.$$ 
\end{proof}

\begin{lemma} \label{lem.genHaHu}
	Let $I(\z)$ be the defining ideal of $s$ generic points in $\PP^N_{\kk(\z)}$. Suppose also that one of the following holds:
	\begin{enumerate}
		\item $N \geqslant 3$ and $s \geqslant 4^N$,
		\item $N \geqslant 4$ and $s \geqslant 3^N$, or 
		\item $N \geqslant 9$ and $s \geqslant 2^N$.
	\end{enumerate}
For $r \gg 0$, we have
	$$I(\z)^{(Nr-N)} \subseteq \mm_\z^{r(N-1)}I(\z)^r.$$
\end{lemma}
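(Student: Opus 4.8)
The plan is to produce, for the ideal $I:=I(\z)$ of the $s$ generic points, a \emph{single} value $c\in\NN$ for which
$$I^{(Nc-N)}\ \subseteq\ \mm_\z^{c(N-1)}I^c ,$$
and then to invoke Corollary \ref{cor.stableChud} with $b=1$ (equivalently Theorem \ref{thm.14all} with $\ell(x,y)=y-x$, whose hypothesis on $\alpha(I)$ is then vacuous) to upgrade this one containment to the asserted stable statement $I^{(Nr-N)}\subseteq\mm_\z^{r(N-1)}I^r$ for all $r\gg0$. Thus the entire content is the production of one good $c$.

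To manufacture such a $c$ I would split the containment into an ``ordinary power'' part and an ``$\mm_\z$-power'' part. For the first, a generic set of points has expected resurgence: since $N\geq 3$, Lemma \ref{lem.comtainment.gen.Exp.Resurgence} together with \cite[Theorem 3.3]{GrifoHunekeMukundan} (cf.\ Theorem \ref{thm.exp.resurgence.general} and Corollary \ref{cor.gen.Ha}) gives $\rho(I)<N$, whence $I^{(Nc-N)}\subseteq I^c$ for all $c\gg0$, because $\tfrac{Nc-N}{c}=N-\tfrac{N}{c}\to N>\rho(I)$. For the second, I would pass from the containment into $I^c$ to the containment into $\mm_\z^{c(N-1)}I^c$ by a regularity/degree comparison in the spirit of \cite[Lemma 2.3.4]{BoH}: once $I^{(Nc-N)}\subseteq I^c$, the stronger containment follows as soon as every form of $I^{(Nc-N)}$ has degree at least $\reg(I^c)+c(N-1)$, i.e. as soon as
$$\alpha\!\left(I^{(Nc-N)}\right)\ \geq\ \reg(I^c)+c(N-1).$$
It therefore suffices to choose $c$ making this single numerical inequality hold.

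This is where the size hypotheses on $s$ and the counting Lemma \ref{lem.count} enter. Write $d=\alpha(I)$, so that $\binom{N+d-1}{N}\le s<\binom{N+d}{N}$ and $\reg(I)\le d+1$ by \cite[Lemma 5.8]{MN2001} and \cite[Corollary 1.6]{GM1984}; this gives a usable upper bound for $\reg(I^c)$ via known bounds on the regularity of powers of ideals of points. For the lower bound on $\alpha(I^{(Nc-N)})$, the bare Chudnovsky estimate $\alpha(I^{(m)})\ge m\tfrac{d+N-1}{N}$ for generic points (\cite[Theorem 2.7]{FMX2018}) is, on its own, just short of what is needed. The role of $s\geq 4^N$ (resp.\ $3^N$, $2^N$) is to force $d$ to be large enough --- say $d\geq 5$ (resp.\ $\geq 4$, $\geq 3$) --- that the inequality
$$d^{\,N}\ \leq\ \binom{Nd-N-1}{N}$$
of Lemma \ref{lem.count}(1) (resp.\ (2), (3)) is available; this is the combinatorial fact that enters the bookkeeping of the estimate for $\alpha\!\left(I^{(Nc-N)}\right)$, in the style of \cite[Lemma 3 and Theorem 4]{DTG2017} (see also \cite[Lemma 3.1]{MSS2018}), and supplies exactly the slack needed to close $\alpha(I^{(Nc-N)})\ge\reg(I^c)+c(N-1)$ for a suitable $c$. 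The three cases of the Lemma are thus matched, one-to-one, with the three cases of Lemma \ref{lem.count}.

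The main obstacle, and the heart of the proof, is this last step: getting a lower bound on $\alpha\!\left(I^{(Nc-N)}\right)$ --- equivalently, a mild sharpening of Chudnovsky's inequality for many generic points --- strong enough to dominate $\reg(I^c)+c(N-1)$, since the naive Chudnovsky bound alone is insufficient and it is precisely the numerical content of Lemma \ref{lem.count}, together with $s\geq 4^N$ (resp.\ $3^N$, $2^N$), that provides the missing margin; controlling $\reg(I(\z)^c)$ from above with a compatible constant is the secondary technical point. Everything else --- the reduction to one value of $c$ and the passage back to $r\gg0$ --- is formal, via Corollary \ref{cor.stableChud}.
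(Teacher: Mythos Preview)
Your outline has the right structural pieces (expected resurgence for the containment $I^{(Nc-N)}\subseteq I^c$, a degree comparison for the $\mm_\z$-power, and Lemma~\ref{lem.count} to exploit the size hypothesis on $s$), but there is a genuine gap at the crucial step, and it stems from misidentifying the variable to which Lemma~\ref{lem.count} is applied.

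You correctly observe that the bare Chudnovsky estimate $N\ahat(I)\geq d+N-1$ is exactly one short of the target $N\ahat(I)>d+N=N-1+\reg(I)$. But your proposed fix---that $s\geq 4^N$ forces $d=\alpha(I)\geq 5$ so that Lemma~\ref{lem.count} gives $d^N\leq\binom{Nd-N-1}{N}$---does not close this gap: that inequality relates $d$ to a binomial coefficient and says nothing whatsoever about $\ahat(I)$. What the paper actually does is introduce a \emph{second} integer $k$ with $(k-1)^N\leq s<k^N$; the hypothesis $s\geq 4^N$ (resp.\ $3^N$, $2^N$) then forces $k\geq 5$ (resp.\ $4$, $3$), and Lemma~\ref{lem.count} is applied to $k$, not $d$. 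The key input you are missing entirely is the Waldschmidt bound $\ahat(I)\geq k-1=\lfloor\sqrt[N]{s}\rfloor$, obtained for very general points in \cite[Theorem~2]{DTG2017} and transferred to generic points via \cite[Theorem~2.4]{FMX2018}. One then has $N(k-1)\geq N+d$ from the definitions of $k$ and $d$, and Lemma~\ref{lem.count} rules out equality: if $N(k-1)=N+d$ then $d=Nk-2N$, whence $\binom{Nk-N-1}{N}=\binom{d+N-1}{N}<s<k^N$, contradicting Lemma~\ref{lem.count}. This yields the strict inequality $N\ahat(I)\geq N(k-1)>N-1+\reg(I)$, which is precisely the ``missing margin.''

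Two smaller points. First, the detour through Corollary~\ref{cor.stableChud} is unnecessary: once $N\ahat(I)>N-1+\reg(I)$ is established, the bound $\alpha(I^{(Nr-N)})\geq (r-1)N\ahat(I)\geq r(N-1+\reg(I))$ holds directly for all $r\gg 0$, so the paper proves the stable containment in one stroke rather than bootstrapping from a single $c$. Second, your concern about controlling $\reg(I^c)$ is a red herring: what is actually used is that $I^c$ is \emph{generated} in degrees at most $c\,\omega(I)\leq c\reg(I)$, which is immediate, and no bound on $\reg(I^c)$ itself is needed.
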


\begin{proof} 
For simplicity of notation, we shall write $I$ for $I(\z)$ in this proof. Let $k$ and $d$ be integers such that $(k-1)^N \leqslant s < k^N$ and ${d+N-1 \choose N} < s \leqslant {d+N \choose N}$.

By \cite[Theorem 2]{DTG2017}, the Waldschmidt constant of a very general set of $s$ points in $\PP^N_\kk$ is bounded below by $\lfloor \sqrt[N]{s}\rfloor$. We remark here that even though \cite[Theorem 2]{DTG2017} assumed $\text{char } \kk = 0$, its proof carries through for any infinite field $\kk$. This, by \cite[Theorem 2.4]{FMX2018}, implies that $\ahat(I) \geqslant \lfloor \sqrt[N]{s}\rfloor = k-1$. Furthermore, as in Lemma \ref{lem.comtainment.gen.Exp.Resurgence}, by \cite[Lemma 5.8]{MN2001} and \cite[Corollary 1.6]{GM1984}, we have $\reg(I) = d+1$.

By the same proof as the one given in \cite[Theorem 4]{DTG2017}, we get
\begin{align}
N(k-1) \geqslant N-1+(d+1) = N+d.\label{eq.genHaHu100}
\end{align}
We claim that (\ref{eq.genHaHu100}) is a strict inequality. Indeed, if $N(k-1) = N+d$ then $d = Nk-2N$. This implies that ${(Nk-2N)+N-1 \choose N} < s < k^N$. That is, ${Nk-N-1 \choose N} < k^N$, a contradiction to Lemma \ref{lem.count}. Thus, $N (k-1) > N+d$. Particularly, we get
$$N \, \ahat(I) > N-1 + \reg(I).$$
Thus, for $r \gg 0$, we have
$$N\ahat(I) \geqslant \dfrac{r}{r-1}(N-1+\reg(I)).$$
It follows that, for $r \gg 0$,
\begin{align}
\alpha(I^{(Nr-N)}) \geqslant (Nr-N)\ahat(I) = (r-1)N \,\ahat(I) \geqslant r(N-1 + \reg(I)). \label{eq.genHaHu1}
\end{align}

Now, by \cite[Lemma 2.3.4]{BoH}, we have $I^{(Nr-N)} \subseteq I^r$ for $r \gg 0$, since $\alpha(I^{(Nr-N)}) \geqslant r \reg(I)$ for $r \gg 0$. This, together with (\ref{eq.genHaHu1}), implies that $I^{(Nr-N)} \subseteq \mm_\z^{r(N-1)}I^r$ for $r \gg 0$.
\end{proof}

The stable Harbourne--Huneke containment for a general set of points in $\PP^2_\kk$ was already proved in \cite[Proposition 3.10]{HaHu}. We shall now establish a stronger version of the stable Harbourne--Huneke containment for a general set of sufficiently many points in $\PP^N_\kk$ when $N \geqslant 3$.

\begin{theorem} \label{thm.HaHu}
Suppose that $N \geqslant 3$.
\begin{enumerate}
\item If $s \geqslant 4^N$ then there exists a constant $r(s,N)$, depending only on $s$ and $N$, such that the stable containment $I^{(Nr)} \subseteq \mm^{r(N-1)}I^r$ holds when $I$ defines a general set of $s$ points in $\PP^N_\kk$ and $r \geqslant r(s,N)$. 
\item Assume that $\text{char } \kk = 0$. If $s \geqslant {N^2+N \choose N}$ then there exists a constant $r(s,N)$, depending only on $s$ and $N$, such that the stable containment $I^{(Nr-N+1)} \subseteq \mm^{(r-1)(N-1)}I^r$ holds when $I$ defines a general set of $s$ points in $\PP^N_\kk$ and $r \geqslant r(s,N)$.
\end{enumerate}
\end{theorem}

\begin{proof} Let $I(\z)$ be the defining ideal of the set of $s \geqslant 4^N$ generic points in $\PP^N_\kk$. By Lemma \ref{lem.genHaHu}, there exists a constant $c \in \NN$ such that
$$I(\z)^{(Nc-N)} \subseteq \mm_\z^{c(N-1)}I(\z)^c.$$

By \cite[Satz 2 and 3]{Krull1948}, there exists an open dense subset $U \subseteq \AA^{s(N+1)}$ such that for all $\a \in U$,
$$\pi_\a(I(\z))^{(Nc-N)} = I(\a)^{(Nc-N)}, \quad \pi_\a(I(\z)^c) = I(\a)^c \quad \text{ and } \quad \pi_\a(\mm_\z^{c(N-1)}) = \mm^{c(N-1)}.$$
Thus, for all $\a \in U$, we have
\begin{align}
I(\a)^{(Nc-N)} \subseteq \mm^{c(N-1)}I(\a)^c. \label{eq.HaHu10}
\end{align}
Note that we can pick $U$ such that for all $\a \in U$, we also have $\alpha(I(\a)) = \alpha(I(\z))$.

Applying Corollary \ref{cor.stableChud} and Remark \ref{rmk.largeR} with $b = 1$ to (\ref{eq.HaHu10}), we get that, for $\a \in U$ and $r \gg 0$ (independent of $I(\a)$),
$$I(\a)^{(Nr-N)} \subseteq \mm^{r(N-1)}I(\a)^r.$$
This proves (1).

Now, suppose that $s \geqslant {N^2+N \choose N}$. This also gives that $s \geqslant 4^N $ since for $N\geqslant 3$ we have ${N^2+N \choose N}\geqslant 4^N$. By \cite[Proposition 6]{GO1982}, $\alpha(I(\a))$ is the least integer $d$ such that ${d+N \choose N} > s$. This implies that $\alpha(I(\a)) \geqslant N^2+1$. Furthermore, it follows from (\ref{eq.HaHu10}) that, for all $\a \in U$,
$$I(\a)^{(Nc-N)} \subseteq \mm^{cN-c-N}I(\a)^c.$$
Therefore, since $\alpha(I(\a)) \geqslant N^2+1$, Corollary \ref{cor.HaHu1} and Remark \ref{rmk.largeR2} apply, noting that $\alpha(I(\a)) = \alpha(I(\z))$ is independent of $I(\a)$, to give
$$I(\a)^{(Nr-N+1)} \subseteq \mm^{(r-1)(N-1)}I^r$$
for all $\a \in U$ and $r \gg 0$ (independent of $I(\a)$). This proves (2) and, hence, the theorem.
\end{proof}

\begin{remark} \label{rem.improvegeneralcont}
By Lemma \ref{lem.genHaHu} and the same proof as that of Theorem \ref{thm.HaHu}, the stable Harbourne--Huneke containment $I^{(Nr)} \subseteq \mm^{r(N-1)}I^r$, for $r \gg 0$, is true for $s\geqslant 3^N$ general points when $N\geqslant 4$ and for $s\geqslant 2^N$ general points when $N\geqslant 9$.
\end{remark}

\begin{remark} To prove Theorem \ref{thm.HaHu}, we in fact establish a stronger containment, namely,
\begin{align}
I^{(Nr-N)} \subseteq \mm^{r(N-1)}I^r \text{ for } r \gg 0, \label{eq.strongcontainment} 
\end{align}
when $I$ is the defining ideal of a general set of (sufficiently many) points. The condition that we have a \emph{general} set of points is necessary. The containment (\ref{eq.strongcontainment}) is not true for an arbitrary set of points. For example, if $I$ is the defining ideal of ${N+d} \choose N $ points forming a \emph{star configuration} (see, for example, \cite{GHM2013} for more details on star configurations), then by \cite[Corollary 4.6]{GHM2013}, we have $ \alpha(I^{(Nr-N)} ) = (r-1)(N+d) <  \alpha( \mm^{r(N-1)}I^r ) = r(N+d)$ for $r \gg 0$, and so the containment (\ref{eq.strongcontainment}) fails. Note that the stable Harbourne--Huneke Conjectures still hold for a star configuration \cite[Corollary 3.9 and Corollary 4.7]{HaHu}.
\end{remark}

%%%%%%%%%%%%%%%%%%%%%%%%%%%%%%%%%

\section{Chudnovsky's Conjecture} \label{sec.Chud}

In this last section of the paper, we prove Chudnovsky's Conjecture for a general set of at least $4^N$ points in $\PP^N_\kk$. For an arbitrary number of points, we also show a weaker version of Chudnovsky's Conjecture. We again make an umbrella assumption, throughout the section, that $\kk$ is an algebraically closed field of arbitrary characteristic.

Chudnovsky's conjecture is known to hold for any set of points in $\PP^2_\kk$; see, for instance, \cite{Chudnovsky1981, HaHu}. We shall focus on the case where $N \geqslant 3$.

\begin{theorem} \label{thm.Chud}
	Suppose that $N \geqslant 3$ and $s \geqslant 4^N$. Chudnovsky's Conjecture holds for a general set of $s$ points in $\PP^N_\kk$.
\end{theorem}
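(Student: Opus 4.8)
The plan is to reduce Chudnovsky's Conjecture for a general set of $s \geqslant 4^N$ points to the stable Harbourne--Huneke containment proved in Theorem \ref{thm.HaHu}(1), via the standard implication ``containment into $\mm^{r(N-1)}I^r$ for $r \gg 0$'' $\Longrightarrow$ ``$\ahat(I) \geqslant \frac{\alpha(I)+N-1}{N}$''. So the first step is to recall, for a homogeneous ideal $J$ of points with $\alpha(\mm^t J) = t + \alpha(J)$ and more generally $\alpha(\mm^t J^r) = t + r\,\alpha(J)$, that a containment $I^{(Nr)} \subseteq \mm^{r(N-1)} I^r$ forces a degree inequality: every element of $I^{(Nr)}$ has degree at least $r(N-1) + r\,\alpha(I)$, hence $\alpha(I^{(Nr)}) \geqslant r(N-1) + r\,\alpha(I) = r(\alpha(I)+N-1)$. (This is exactly the kind of computation encapsulated in \cite[Lemma 2.3.4]{BoH} or the discussion around Proposition \ref{pro.HaHu}; I would cite it rather than reprove it.)

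Next I would divide by $Nr$ and pass to the limit: from $\alpha(I^{(Nr)}) \geqslant r(\alpha(I)+N-1)$ for all $r \gg 0$ we get
$$\frac{\alpha(I^{(Nr)})}{Nr} \geqslant \frac{\alpha(I)+N-1}{N} \quad \text{for all } r \gg 0,$$
and letting $r \to \infty$ along the subsequence of multiples of $N$ (the full limit defining $\ahat(I)$ exists by the subadditivity recalled in Section \ref{sec.prel}, so any subsequence computes it) yields
$$\ahat(I) = \lim_{r \to \infty} \frac{\alpha(I^{(Nr)})}{Nr} \geqslant \frac{\alpha(I)+N-1}{N}.$$
By the equivalent reformulation of Chudnovsky's Conjecture in terms of the Waldschmidt constant (Conjecture \ref{conj.Chud}, restated with $\ahat$), this is precisely the assertion for the ideal $I$ defining a general set of $s$ points.

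To finish, I invoke Theorem \ref{thm.HaHu}(1): since $s \geqslant 4^N$ and $N \geqslant 3$, the defining ideal $I$ of a general set of $s$ points in $\PP^N_\kk$ satisfies $I^{(Nr)} \subseteq \mm^{r(N-1)} I^r$ for all $r \gg 0$, which by the previous two steps gives $\ahat(I) \geqslant \frac{\alpha(I)+N-1}{N}$. The cases $N = 1, 2$ are classical (Chudnovsky proved $N = 2$, and $N = 1$ is trivial), so the remaining case $N \geqslant 3$ is handled, and the theorem follows. I do not expect a genuine obstacle here — all the weight has already been carried by Theorem \ref{thm.HaHu}; the only points requiring care are (a) making sure the degree bookkeeping $\alpha(\mm^t J^r) = t + r\,\alpha(J)$ is stated for the right ring (it holds because $\mm$ is the irrelevant maximal ideal, so multiplying by $\mm^t$ shifts minimal degrees up by exactly $t$), and (b) confirming that taking the limit along multiples of $N$ suffices, which is immediate from the existence of $\ahat(I)$ as an honest limit (equivalently infimum).
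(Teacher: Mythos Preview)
Your proposal is correct and follows essentially the same approach as the paper: invoke Theorem \ref{thm.HaHu}(1) for $N \geqslant 3$ to obtain $I^{(Nr)} \subseteq \mm^{r(N-1)}I^r$ for $r \gg 0$, read off $\alpha(I^{(Nr)}) \geqslant r(\alpha(I)+N-1)$, divide by $Nr$ and let $r \to \infty$ to get $\ahat(I) \geqslant \frac{\alpha(I)+N-1}{N}$, and handle $N \leqslant 2$ by the classical results. The paper's proof is the same argument, only slightly more terse.
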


\begin{proof} Let $I$ be the defining ideal of a general set of $s$ points in $\PP^N_\kk$. By Theorem \ref{thm.HaHu}, for $r \gg 0$, we have
	$$I^{(Nr)} \subseteq \mm^{r(N-1)}I^r.$$
This implies that $\alpha(I^{(Nr)}) \geqslant r(N-1)+ r\alpha(I)$. Thus,
$$\dfrac{\alpha(I^{(Nr)})}{Nr} \geqslant \dfrac{\alpha(I)+N-1}{N}.$$
By letting $r \rightarrow \infty$, we get
$$\ahat(I) \geqslant \dfrac{\alpha(I)+N-1}{N}.$$
The conjecture is proved, noting that $\ahat(I) \leqslant \dfrac{\alpha(I^{(m)})}{m}$ for all $m \in \NN$.
\end{proof}

\begin{remark}
Following Remark \ref{rem.improvegeneralcont}, Chudnovsky's Conjecture holds for $s\geqslant 3^N$ general points when $N\geqslant 4$ and for $s\geqslant 2^N$ general points when $N \geqslant 9$.
\end{remark}

Observe that Theorem \ref{thm.Chud} follows directly from the validity of Lemma \ref{lem.genHaHu}. In fact, a slightly weaker statement than Lemma \ref{lem.genHaHu} would already be enough to derive Theorem \ref{thm.Chud}.

\begin{proposition} \label{pro.HaHu}
	Let $I$ be any ideal of big height $h$. Suppose that for some constant $c \in \NN$, we have $I^{(hc-h+1)} \subseteq \mm^{c(h-1)}I^c$. Then,
	$$\ahat(I) \geqslant \dfrac{\alpha(I)+h-1}{h}.$$
\end{proposition}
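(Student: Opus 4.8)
The plan is to bootstrap the single hypothesized containment $I^{(hc-h+1)} \subseteq \mm^{c(h-1)}I^c$ into a whole family of containments of the shape $I^{(hr)} \subseteq \mm^{r(h-1)}I^r$, valid for all $r$ in the arithmetic progression $c\NN$, and then to read off the bound on $\ahat(I)$ from least-generating-degree estimates, using that the Waldschmidt constant is a genuine limit (hence unchanged under passing to a subsequence of powers). This is in the same spirit as Theorem \ref{thm.14all}, but here the asymmetric left-hand exponent $hc-h+1$ blocks a direct appeal to Corollary \ref{cor.stableChud} (whose hypothesis involves $I^{(hc-h)}$, a \emph{larger} ideal), so the iteration has to be carried out by hand with Johnson's theorem.

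First I would apply Theorem \ref{thm.Johnson} with an arbitrary $k \geqslant 1$ and the choices $a_1 = \cdots = a_k = hc-h$. Then each $a_i+1$ equals $hc-h+1$, while the total symbolic exponent is $hk + a_1 + \cdots + a_k = hk + kh(c-1) = hkc$, so Johnson's theorem yields
$$I^{(hkc)} \subseteq \big(I^{(hc-h+1)}\big)^k \subseteq \big(\mm^{c(h-1)}I^c\big)^k = \mm^{kc(h-1)}I^{kc}.$$
Writing $r = kc$, this reads $I^{(hr)} \subseteq \mm^{r(h-1)}I^r$ for every $r$ divisible by $c$. Next, for such $r$ I would compare least generating degrees: since $\alpha(\mm^{a}J) = a + \alpha(J)$ for a nonzero homogeneous ideal $J$ and $a \geqslant 0$, and $\alpha(I^r) = r\,\alpha(I)$, we get
$$\alpha(I^{(hr)}) \geqslant \alpha\big(\mm^{r(h-1)}I^r\big) = r(h-1) + r\,\alpha(I) = r\big(\alpha(I)+h-1\big),$$
so $\alpha(I^{(hr)})/(hr) \geqslant (\alpha(I)+h-1)/h$ for all $r \in c\NN$.

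Finally I would invoke that $\ahat(I) = \lim_{m\to\infty} \alpha(I^{(m)})/m$ exists; since the value of a convergent sequence equals the limit of any of its subsequences, taking $m = hkc$ with $k \to \infty$ gives $\ahat(I) \geqslant (\alpha(I)+h-1)/h$. I do not expect a real obstacle here: the only points requiring care are (i) picking the $a_i$ in Johnson's theorem so that the resulting symbolic exponent is exactly $hkc$ rather than some nearby value, and (ii) observing that obtaining the containment only along the progression $r \in c\NN$ is harmless precisely because $\ahat(I)$ is defined as a limit.
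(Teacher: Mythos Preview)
Your proposal is correct and is essentially identical to the paper's own proof: both apply Theorem \ref{thm.Johnson} with $k$ (the paper writes $t$) copies of $a_i = hc-h$ to obtain $I^{(hkc)} \subseteq \big(I^{(hc-h+1)}\big)^k \subseteq \mm^{kc(h-1)}I^{kc}$, read off the degree inequality, and pass to the limit along the subsequence $m = hkc$. Your remark that the exponent $hc-h+1$ prevents a direct appeal to Corollary \ref{cor.stableChud} is a correct and useful observation, though the paper does not comment on this.
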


\begin{proof} 
We claim that for all $t \geqslant 1$,
\begin{align}
I^{(hct)} \subseteq \mm^{(h-1)ct} I^{ct}.\label{eq.pro1}
\end{align}
This implies that
$$\alpha \left( I^{(hct)} \right) \geqslant \alpha \left( \mm^{(h-1)ct} I^{ct} \right) = ct (h-1) + \alpha(I) ct.$$
Therefore,
$$\ahat(I) = \lim_{m \rightarrow \infty} \frac{\alpha \left( I^{(m)} \right)}{m} = \lim_{t \rightarrow \infty} \frac{\alpha \left( I^{(hct)} \right)}{hct} \geqslant \frac{\alpha(I)+h-1}{h}.$$

Now we show (\ref{eq.pro1}):
\begin{align*}
I^{(hct)} & = I^{(ht + t (hc-h))} & \textrm{}\\
& \subseteq \left( I^{(hc-h+1)} \right)^t & \textrm{by Theorem \ref{thm.Johnson}}\\
& \subseteq \left( \mm^{(h-1)c} I^c \right)^t & \textrm{by the hypothesis}\\
& = \mm^{(h-1)ct} I^{ct}.
\end{align*}
The result is proved.
\end{proof}

\begin{example}[Fermat configurations revisited]
	As we saw in Example \ref{example fermat}, the ideals $I = \left( x(y^n-z^n), y(z^n-x^n), z(x^n-y^n) \right)$ in $\kk[x,y,z]$ corresponding to a Fermat configuration of $n^2+3$ points in $\mathbb{P}^2$ satisfies $I^{(2r-2)} \subseteq I^r$ for all $r \gg 0$. Similarly to what we did in Example \ref{example fermat}, one can show that $I^{(5)} \subseteq \mm^3 I^3$, which by Proposition \ref{pro.HaHu} implies that $\ahat(I) \geqslant \frac{n+1}{2}$. In fact, by \cite[Theorem 2.1]{DHNSST2015}, $\ahat(I) = n$.
\end{example}

\begin{example} 
Let $I$ be the defining ideal in $R = \mathbb{C}[x,y,z]$ of the curve $(t^3,t^4,t^5)$, i.e., the kernel of the map $x \mapsto t^4$, $y \mapsto t^4$  and $z \mapsto t^5$, which is a prime of height $2$. Macaulay2 \cite{M2} computations show that $I^{(3)} \subseteq \mm^2 I^2$, and as in the proof of Proposition \ref{pro.HaHu}, this implies that $I$ satisfies a Chudnovsky-like bound.

It is natural to ask if such a bound holds when $I$ is the defining ideal of the curve $(t^a,t^b,t^c)$, where $a, b, c$ take any value. The symbolic powers of the ideals in this class are of considerable interest, in particular because the symbolic Rees algebra of $I$ can fail to be noetherian -- although it is noetherian for $(t^3,t^4,t^5)$ above. And yet, these Chudnovsky-like bounds also hold in cases where the symbolic Rees algebra of $I$ is not noetherian. For example, when $I$ defines $(t^{25}, t^{72}, t^{29})$, which has a non-noetherian symbolic Rees algebra by \cite{NonnoetherianSymb}, Macaulay2 \cite{M2} computations also give $I^{(3)} \subseteq \mm^2 I^2$.
\end{example}

We shall now prove a slightly weaker version of Chudnovsky's Conjecture holds for any general set of arbitrary number of points in $\PP^N_\kk$. We first show that such a statement holds for any number of points if it holds for a binomial coefficient number of points.

\begin{lemma} \label{lem.binomial}
Let $f: \NN \rightarrow \QQ$ be any numerical function. If the inequality
$$\dfrac{\alpha(I^{(m)})}{m} \geqslant f(\alpha(I))$$
holds for the defining ideal of a general set of ${d+N \choose N}$ points in $\PP^N_\kk$, for any $d \geqslant 0$, then the inequality holds for the defining ideal of a general set of arbitrary number of points in $\PP^N_\kk$.
\end{lemma}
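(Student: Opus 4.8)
The plan is to reduce the case of an arbitrary number of points to a binomial coefficient number of points by \emph{deleting} points, in such a way that $\alpha$ does not change. Fix $s \geqslant 1$. If $s = {e+N \choose N}$ for some $e \geqslant 0$, then the inequality for a general set of $s$ points is precisely the hypothesis, so assume $s$ is not of this form. Since the numbers ${e+N \choose N}$, $e = 0,1,2,\dots$, form a strictly increasing sequence starting at $1$, there is a unique $d \geqslant 1$ with ${d+N-1 \choose N} < s < {d+N \choose N}$. Put $s' := {d+N-1 \choose N} = {(d-1)+N \choose N}$, which is a binomial coefficient number of points with $s' < s$. Since a set of $\ell$ points in generic position has $\alpha$ equal to the least $t$ with ${t+N \choose N} > \ell$, both a general set of $s$ points and a general set of $s'$ points have $\alpha = d$. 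Now if $\YY$ consists of $s'$ of the $s$ points of a set $\XX$, then $I_\XX \subseteq I_\YY$, hence $I_\XX^{(m)} = \bigcap_{P_i \in \XX}\pp_i^m \subseteq \bigcap_{P_i \in \YY}\pp_i^m = I_\YY^{(m)}$, so $\alpha(I_\XX^{(m)}) \geqslant \alpha(I_\YY^{(m)})$; and if moreover $\YY$ is a general set of $s'$ points we may apply the hypothesis to it, obtaining $\alpha(I_\XX^{(m)})/m \geqslant \alpha(I_\YY^{(m)})/m \geqslant f(\alpha(I_\YY)) = f(d) = f(\alpha(I_\XX))$.

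It remains to arrange the genericity, for which I would work in $\AA^{s(N+1)}$ and invoke Lemma \ref{lem.Hilbert}. By the hypothesis together with Lemma \ref{lem.Hilbert} applied to $s'$ points, and intersecting with the (open dense) generic-position locus, there is an open dense $U \subseteq \AA^{s'(N+1)}$ such that for all $\a' \in U$ the set $\XX(\a')$ consists of $s'$ distinct points in generic position (so $\alpha(I_{\XX(\a')}) = d$) and satisfies $\alpha(I_{\XX(\a')}^{(m)})/m \geqslant f(\alpha(I_{\XX(\a')}))$. Let $\pi \colon \AA^{s(N+1)} \to \AA^{s'(N+1)}$ be the coordinate projection that remembers only the first $s'$ points; it is open and surjective, so $\pi^{-1}(U)$ is open dense in $\AA^{s(N+1)}$. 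Let $W_1 \subseteq \AA^{s(N+1)}$ be the open dense locus on which $\XX(\a)$ is a set of $s$ distinct points in generic position, and set $W := \pi^{-1}(U) \cap W_1$, which is open dense.

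For $\a \in W$, write $\XX = \XX(\a)$ and let $\YY = \{P_1(\a),\dots,P_{s'}(\a)\} \subseteq \XX$. Since $\pi(\a) \in U$, the set $\YY$ is a general set of $s'$ points with $\alpha(I_\YY) = d$, and by the first paragraph $\alpha(I_\XX) = d$ and $\alpha(I_\XX^{(m)})/m \geqslant \alpha(I_\YY^{(m)})/m \geqslant f(d) = f(\alpha(I_\XX))$. As $W$ is open dense, Lemma \ref{lem.Hilbert} shows the inequality holds for a general set of $s$ points, which finishes the proof. The only subtle point is this genericity transfer — that a general set of $s$ points genuinely contains a general set of $s'$ of its points — which is supplied by the openness and surjectivity of the coordinate projection $\pi$; everything else is the elementary comparison of $\alpha$'s coming from the inclusion $\YY \subseteq \XX$ and the fact that deleting down to $s'$ points keeps $\alpha$ equal to $d$.
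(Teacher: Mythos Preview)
Your argument is correct and follows essentially the same approach as the paper, which defers to \cite[Proposition 2.5(a)]{FMX2018}: reduce from $s$ points to the largest binomial coefficient $s' = {d+N-1 \choose N} \leqslant s$ of points by deletion, use that both configurations in generic position have $\alpha = d$, and compare $\alpha(I^{(m)})$ via the containment $I_\XX^{(m)} \subseteq I_\YY^{(m)}$. Your explicit handling of the genericity transfer through the coordinate projection $\pi$ is a helpful elaboration of what the paper leaves implicit; the only minor quibble is that you invoke Lemma~\ref{lem.Hilbert} in the wrong direction (you need the easier converse, that a general property on the Chow variety pulls back to an open dense locus in $\AA^{s'(N+1)}$), but this is immediate from the definitions.
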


\begin{proof} 
The proof goes exactly as in \cite[Proposition 2.5(a)]{FMX2018}, replacing ``set of generic points in $\PP^N_{\kk(\z)}$'' by ``general set of points in $\PP^N_\kk$''. Note that both the set of generic points in $\PP^N_{\kk(\z)}$ and a general set of points in $\PP^N_\kk$ have the maximal (\emph{generic}) Hilbert function.
\end{proof}

The next lemma establishes one containment for a binomial coefficient number of generic points in $\PP^N_{\kk(\z)}$.

\begin{lemma} \label{lem.genChud}
	Let $I(\z)$ be the defining ideal of a set of $s = {d+N \choose N}$ generic points in $\PP^N_{\kk(\z)}$, for $N \geqslant 3$. There exists a constant $c \in \NN$ such that
	$$I(\z)^{(Nc-N)} \subseteq \mm_\z^{c(N-2)}I(\z)^c.$$
\end{lemma}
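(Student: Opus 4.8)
The plan is to mirror the proof of Lemma~\ref{lem.genHaHu}, but extract the slightly weaker numerical inequality that one only needs for the Chudnovsky-type bound, which in turn relaxes the hypothesis from $s \geqslant 4^N$ to the \emph{exact} binomial value $s = {d+N \choose N}$. First I would fix $d$ and set $s = {d+N \choose N}$, so that, by \cite[Proposition 6]{GO1982} (or \cite[Lemma 5.8]{MN2001} together with \cite[Corollary 1.6]{GM1984} as used in Lemma~\ref{lem.comtainment.gen.Exp.Resurgence}), $\alpha(I(\z)) = d$ and $\reg(I(\z)) \leqslant d+1$. Next, I would invoke the Waldschmidt-constant lower bound: by \cite[Theorem 2]{DTG2017} and \cite[Theorem 2.4]{FMX2018}, $\ahat(I(\z)) \geqslant \lfloor \sqrt[N]{s} \rfloor$. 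The key arithmetic input is that ${d+N \choose N}$ points force $\lfloor \sqrt[N]{s}\rfloor$ to be large enough relative to $d$; concretely I expect $N\lfloor\sqrt[N]{s}\rfloor > N - 2 + \reg(I(\z))$, i.e.\ $N\,\ahat(I(\z)) > (N-2) + (d+1)$, which is the analogue of the strict inequality \eqref{eq.genHaHu100} but with $N-1$ replaced by $N-2$ — this is exactly the weakening that makes $\mm^{c(N-2)}$ rather than $\mm^{c(N-1)}$ appear on the right.

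Granting that strict inequality, the argument then proceeds verbatim as in Lemma~\ref{lem.genHaHu}: strictness gives, for $r \gg 0$,
$$N\,\ahat(I(\z)) \geqslant \dfrac{r}{r-1}\big((N-2) + \reg(I(\z))\big),$$
hence
$$\alpha\big(I(\z)^{(Nr-N)}\big) \geqslant (Nr-N)\,\ahat(I(\z)) = (r-1)N\,\ahat(I(\z)) \geqslant r\big((N-2) + \reg(I(\z))\big).$$
Combined with the stable containment $I(\z)^{(Nr-N)} \subseteq I(\z)^r$ for $r \gg 0$ — which holds because $I(\z)$ has expected resurgence (Corollary~\ref{cor.gen.Ha}, or directly Lemma~\ref{lem.comtainment.gen.Exp.Resurgence} plus Theorem~\ref{thm.Grifo}) — the degree estimate forces $I(\z)^{(Nr-N)} \subseteq \mm_\z^{r(N-2)} I(\z)^r$ for all sufficiently large $r$, since any element of $I(\z)^{(Nr-N)}$ has degree at least $r(N-2) + r\alpha(I(\z))$ above... more precisely, $\alpha(I(\z)^{(Nr-N)}) \geqslant r(N-2) + \reg(I(\z)^r)$ places the symbolic power inside $\mm_\z^{r(N-2)}$ times the ordinary power via \cite[Lemma 2.3.4]{BoH}. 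Taking any such $r$ as the constant $c$ yields the claim.

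The main obstacle I anticipate is verifying the strict inequality $N\lfloor\sqrt[N]{s}\rfloor > (N-2) + (d+1)$ for $s = {d+N \choose N}$ across all $d \geqslant 0$ and $N \geqslant 3$ — this is the counting step, analogous to Lemma~\ref{lem.count}, and it requires comparing $\lfloor ({d+N \choose N})^{1/N} \rfloor$ with $d/N$. One subtlety is small values of $d$ (e.g.\ $d = 0, 1, 2$), where $s$ is small and the floor of the $N$-th root may be just $1$; one should check those cases directly or note that Chudnovsky's Conjecture / the desired bound is already known or trivial there. I would handle the generic range by the estimate ${d+N \choose N} \geqslant \big(\tfrac{d}{N}+1\big)^N$ (or a sharper bound like ${d+N \choose N} \geqslant \tfrac{(d+N)^N}{N^N}$), which gives $\lfloor s^{1/N}\rfloor \geqslant \lceil d/N\rceil$ roughly, and then a short case analysis on $d \bmod N$ to upgrade weak inequality to strict — paralleling how \eqref{eq.genHaHu100} was shown to be strict using Lemma~\ref{lem.count}. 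Once the counting lemma is in place, the rest is a routine transcription of the Lemma~\ref{lem.genHaHu} argument.
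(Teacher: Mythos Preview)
Your overall strategy---establish $\alpha(I(\z)^{(Nr-N)}) \geqslant r\big((N-2)+\reg(I(\z))\big)$ for large $r$ and combine with the stable containment $I(\z)^{(Nr-N)}\subseteq I(\z)^r$ from Corollary~\ref{cor.gen.Ha}---is exactly what the paper does. Where you diverge is the lower bound on the symbolic initial degree: you propose using $\ahat(I(\z)) \geqslant \lfloor s^{1/N}\rfloor$ from \cite{DTG2017,FMX2018}, whereas the paper uses \cite[Theorem~2.7]{FMX2018}, i.e.\ the already-known Chudnovsky inequality for \emph{generic} points, which directly gives
$\alpha(I(\z)^{(Nr-N)}) \geqslant \frac{d+N}{N}(Nr-N) = (d+N)(r-1)$.
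With this, the target inequality becomes the one-liner $(d+N)(r-1) \geqslant r(d+N-1)$, valid for $r \geqslant d+N$, and no counting lemma is needed at all.

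Your route, by contrast, has a real gap at the counting step: the inequality $N\lfloor s^{1/N}\rfloor > N+d-1$ that you need fails for small $d$. For $d=1$ one has $s = N+1 < 2^N$, so $\lfloor s^{1/N}\rfloor = 1$ and you would need $N > N$; for $N=4$, $d=2$ one has $s=15<16=2^4$ and the same failure occurs. Your proposed fallback---appeal to known Chudnovsky for small $s$---would in fact invoke precisely the bound from \cite[Theorem~2.7]{FMX2018} that the paper uses from the outset, so the detour through $\lfloor s^{1/N}\rfloor$ buys nothing and only creates extra case analysis. One bookkeeping correction as well: for $s = {d+N \choose N}$ the generic values are $\alpha(I(\z)) = \reg(I(\z)) = d+1$, not $\alpha = d$ (the latter holds only when $s < {d+N\choose N}$); the paper relies on this exact equality, and it is what makes the arithmetic $(d+N)(r-1)\geqslant r(d+N-1)$ close up cleanly.
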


\begin{proof} 
The proof mimics that of Lemma \ref{lem.genHaHu}. By Corollary \ref{cor.gen.Ha}, for $r \gg 0$ we have
	$$I(\z)^{(Nr-N)} \subseteq I(\z)^r.$$
It suffices to show now that for $r \gg 0$, we have $$\alpha(I(\z)^{(Nr-N)}) \geqslant r(N-2+\reg(I(\z))).$$

Indeed, since $s = {d+N \choose N}$, again as in Lemma \ref{lem.comtainment.gen.Exp.Resurgence}, by \cite[Lemma 5.8]{MN2001} and \cite[Corollary 1.6]{GM1984}, we have $\alpha(I(\z)) = \reg(I(\z)) = d+1$. Thus,
$$r(N-2+\reg(I(\z))) = r(N+d-1).$$
On the other hand, by \cite[Theorem 2.7]{FMX2018}, for all $r$ we have
$$\alpha(I(\z)^{(Nr-N)}) \geqslant \dfrac{\alpha(I(\z))+N-1}{N}(Nr-N) = \dfrac{d+N}{N}(Nr-N) = (d+N)(r-1).$$

Now, for $r \gg 0$, we have 
$$\dfrac{r-1}{r} \geqslant \dfrac{d+N-1}{d+N},$$
or equivalently, $(d+N)(r-1) \geqslant r(d+N-1)$. This shows that $\alpha(I(\z)^{(Nr-N)}) \geqslant r(d+N-1)$, and we are done.
\end{proof}

We are now ready to show our general lower bound.

\begin{theorem} \label{thm.weakChud}
	Let $I$ be the defining ideal of a general set of points in $\PP^N_\kk$, for $N \geqslant 3$. For all $m \in \NN$, we have
	$$\dfrac{\alpha(I^{(m)})}{m} \geqslant \dfrac{\alpha(I)+N-2}{N}.$$
\end{theorem}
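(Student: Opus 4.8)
The plan is to mirror the proof of Theorem~\ref{thm.Chud}, but starting from the weaker containment of Lemma~\ref{lem.genChud} in place of Lemma~\ref{lem.genHaHu}. First I would apply Lemma~\ref{lem.binomial} with the numerical function $f(a) = \frac{a+N-2}{N}$: this reduces the statement to the case where $I$ is the defining ideal of a general set of $s = {d+N \choose N}$ points in $\PP^N_\kk$, for an arbitrary fixed integer $d \geqslant 0$.

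Next I would pass to generic points. Let $I(\z)$ be the defining ideal of a set of $s = {d+N \choose N}$ generic points in $\PP^N_{\kk(\z)}$. By Lemma~\ref{lem.genChud}, there is a constant $c \in \NN$ with
$$I(\z)^{(Nc-N)} \subseteq \mm_\z^{c(N-2)} I(\z)^c.$$
I would then specialize exactly as in the proofs of Theorems~\ref{thm.exp.resurgence.general} and~\ref{thm.HaHu}: by the remarks in Section~\ref{sec.prel} (following \cite[Satz 2 and 3]{Krull1948}), there is an open dense subset $U \subseteq \AA^{s(N+1)}$ on which $\pi_\a$ carries $I(\z)^{(Nc-N)}$ and $\mm_\z^{c(N-2)}I(\z)^c$ to $I(\a)^{(Nc-N)}$ and $\mm^{c(N-2)}I(\a)^c$ respectively, and since $\pi_\a$ preserves inclusions this yields
$$I(\a)^{(Nc-N)} \subseteq \mm^{c(N-2)} I(\a)^c \qquad \text{for all } \a \in U.$$
A set of points in $\PP^N_\kk$ has big height $h = N$, so Corollary~\ref{cor.stableChud} applied with $b = 2$ upgrades this single containment to the stable family $I(\a)^{(Nr-N)} \subseteq \mm^{r(N-2)} I(\a)^r$ for all $\a \in U$ and $r \gg 0$.

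The rest is a degree count. For $\a \in U$ and $r \gg 0$, taking least generating degrees gives $\alpha\!\left(I(\a)^{(Nr-N)}\right) \geqslant r(N-2) + r\,\alpha(I(\a))$, where I use that $\alpha(I(\a)) = \alpha(I(\z))$ because a general and a generic set of ${d+N \choose N}$ points share the generic Hilbert function. Dividing by $Nr-N$ and letting $r \to \infty$,
$$\ahat(I(\a)) = \lim_{r \to \infty} \dfrac{\alpha\!\left(I(\a)^{(Nr-N)}\right)}{Nr-N} \geqslant \lim_{r \to \infty} \dfrac{r\big(\alpha(I(\a))+N-2\big)}{N(r-1)} = \dfrac{\alpha(I(\a))+N-2}{N}.$$
Since $\ahat(I(\a)) = \inf_{m} \frac{\alpha(I(\a)^{(m)})}{m} \leqslant \frac{\alpha(I(\a)^{(m)})}{m}$ for every $m$, the desired inequality holds for $\XX(\a)$ whenever $\a \in U$. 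By Lemma~\ref{lem.Hilbert} this means it holds for a general set of ${d+N \choose N}$ points, and hence, by the reduction in the first paragraph, for a general set of an arbitrary number of points.

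Once Lemmas~\ref{lem.binomial} and~\ref{lem.genChud} and Corollary~\ref{cor.stableChud} are in hand, these steps are essentially bookkeeping; the only point requiring a little care is the compatibility of the open dense subsets — that a single $U$ can be chosen on which the specialization respects the symbolic power, the ordinary power, and the power of $\mm$ simultaneously — but this is already arranged by the remarks in Section~\ref{sec.prel}. I do not expect a genuine obstacle here: the substantive content of the theorem is entirely concentrated in Lemma~\ref{lem.genChud}, which in turn rests on the Waldschmidt-constant bound of \cite[Theorem~2.7]{FMX2018} and the regularity computation for generic points.
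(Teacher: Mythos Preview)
Your proposal is correct and follows essentially the same route as the paper's proof: reduce via Lemma~\ref{lem.binomial} to a binomial number of points, apply Lemma~\ref{lem.genChud} to get one containment for generic points, specialize via Krull to an open dense set $U$, upgrade to a stable containment with Corollary~\ref{cor.stableChud} ($b=2$), and read off the Waldschmidt bound. The only cosmetic differences are that the paper passes through $I^{(Nr)} \subseteq I^{(Nr-N)}$ and divides by $Nr$ rather than $Nr-N$, and that your aside about $\alpha(I(\a)) = \alpha(I(\z))$ is unnecessary, since the degree inequality follows directly from the containment $I(\a)^{(Nr-N)} \subseteq \mm^{r(N-2)} I(\a)^r$ without any reference to the generic ideal.
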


\begin{proof} By Lemma \ref{lem.binomial}, it suffices to prove the assertion for $s = {d+N \choose N}$ a binomial coefficient number.

By Lemma \ref{lem.genChud}, there exists a constant $c \in \NN$ such that
	$$I(\z)^{(Nc-N)} \subseteq \mm_\z^{c(N-2)}I(\z)^c.$$
It follows from \cite[Satz 2 and 3]{Krull1948} again that there exists an open dense subset $U \subseteq \AA^{s(N+1)}$ such that for all $\a \in U$, we have
$$\pi_\a(I(\z)^{(Nc-N)}) = I(\a)^{(Nc-N)},  \pi_\a(\mm_\z^{c(N-2)}) = \mm^{c(N-2)} \text{ and } \pi_\a(I(\z)^c) = I(\a)^c.$$
Now, applying Corollary \ref{cor.stableChud} with $b = 2$, we have, for $\a \in U$ and $r \gg 0$,
$$I(\a)^{(Nr)} \subseteq I(\a)^{(Nr-N)} \subseteq \mm^{r(N-2)}I(\a)^r.$$
This implies that, for $\a \in U$ and $r \gg 0$,
$$\dfrac{\alpha(I(\a)^{(Nr)})}{Nr} \geqslant \dfrac{\alpha(I(\a))+N-2}{N}.$$
By letting $r \rightarrow \infty$, we get $\ahat(I(\a)) \geqslant \dfrac{\alpha(I(\a))+N-2}{N}$ for all $\a \in U$, and the assertion follows.
\end{proof}

As a direct consequence of Theorem \ref{thm.weakChud}, we can show that the defining ideal of a fat point scheme over a general support and with equal multiplicity also satisfies Chudnovsky's Conjecture.

\begin{corollary} \label{chud.symbolic}
Let $J$ be the defining ideal of a fat point scheme $\XX = tP_1 + \dots + tP_s$ in $\PP^N_\kk$ with a general support, for some $t \in \NN$. If $t \geqslant 2$ then Chudnovsky's Conjecture holds for $J$, i.e.,
$$\ahat(J) \geqslant \dfrac{\alpha(J)+N-1}{N}.$$
\end{corollary}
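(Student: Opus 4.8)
The plan is to reduce the fat point ideal $J = \pp_1^t \cap \dots \cap \pp_s^t$ to the radical ideal $I = \pp_1 \cap \dots \cap \pp_s$ of the reduced support and then invoke Theorem \ref{thm.weakChud}. First I would record the two basic identities relating $J$ and $I$ over a general support: on the one hand $J = I^{(t)}$, since for points $I^{(t)} = \pp_1^t \cap \dots \cap \pp_s^t$; on the other hand, the symbolic powers of $J$ are simply $J^{(m)} = (I^{(t)})^{(m)} = I^{(tm)}$, because $(\pp_1^t \cap \dots \cap \pp_s^t)^{(m)} = \pp_1^{tm} \cap \dots \cap \pp_s^{tm}$. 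Consequently $\ahat(J) = \lim_m \alpha(J^{(m)})/m = \lim_m \alpha(I^{(tm)})/m = t\,\ahat(I)$, and $\alpha(J) = \alpha(I^{(t)})$.

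Next I would feed these into the weak Chudnovsky bound of Theorem \ref{thm.weakChud}, which gives $\alpha(I^{(m)})/m \geqslant (\alpha(I)+N-2)/N$ for \emph{every} $m$, in particular for $m = t$. Thus $\alpha(J) = \alpha(I^{(t)}) \geqslant \frac{t(\alpha(I)+N-2)}{N}$, and also $\ahat(I) \geqslant \frac{\alpha(I)+N-2}{N}$, whence $\ahat(J) = t\,\ahat(I) \geqslant \frac{t(\alpha(I)+N-2)}{N}$. To close the argument it suffices to check the numerical inequality
$$\frac{t(\alpha(I)+N-2)}{N} \;\geqslant\; \frac{\alpha(J)+N-1}{N},$$
i.e. $t(\alpha(I)+N-2) \geqslant \alpha(J) + N-1$. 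Using $\alpha(J) = \alpha(I^{(t)}) \leqslant t\,\alpha(I)$ (subadditivity of $\alpha$ on symbolic powers, since $I^{(t)} \supseteq$ nothing helps, but rather $\alpha(I^{(t)}) \leqslant t\alpha(I)$ because $(I)^t \subseteq I^{(t)}$ gives $\alpha(I^{(t)}) \leqslant \alpha(I^t) = t\alpha(I)$ — wait, containment goes the wrong way; instead use $\alpha(I^{(t)}) \leqslant t\alpha(I)$ directly since a product of $t$ minimal-degree generators of $I$ lies in $I^t \subseteq I^{(t)}$). Then it is enough that $t(\alpha(I)+N-2) \geqslant t\alpha(I) + N - 1$, i.e. $t(N-2) \geqslant N-1$, which holds for $t \geqslant 2$ once $N \geqslant 3$ (indeed $2(N-2) = 2N-4 \geqslant N-1 \iff N \geqslant 3$).

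The main obstacle is the bookkeeping around $\alpha(J)$: one must be careful that the reduction uses the \emph{weak} bound at the single value $m=t$ together with the correct comparison $\alpha(I^{(t)}) \leqslant t\,\alpha(I)$, and that the leftover numerical slack $t(N-2) \geqslant N-1$ is exactly what forces the hypotheses $t \geqslant 2$ and $N \geqslant 3$. (For $N = 2$ one instead appeals to the classical case, but the statement here is for general $N$, and the $N=2$ fat-point case follows from Chudnovsky's original theorem combined with $\ahat(J) = t\,\ahat(I)$.) Everything else is the routine translation between $J$, its support ideal $I$, and their symbolic powers.
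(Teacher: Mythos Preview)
Your argument is correct and follows the same underlying strategy as the paper: reduce to the reduced support ideal $I$ via $J = I^{(t)}$, use that $\ahat(J) = t\,\ahat(I)$, and then invoke the weak Chudnovsky bound of Theorem~\ref{thm.weakChud} for $I$. The difference is in packaging. The paper outsources the numerical reduction to \cite[Lemma~3.6]{FMX2018}, which says that whenever $\ahat(I) = \frac{\alpha(I)}{N} + \epsilon$ with $\epsilon > 0$, the fat point ideal $J = I^{(t)}$ satisfies Chudnovsky for all $t \geqslant \frac{N-1}{N\epsilon}$; Theorem~\ref{thm.weakChud} then supplies $\epsilon \geqslant \frac{N-2}{N}$, giving the threshold $t \geqslant \frac{N-1}{N-2} \leqslant 2$. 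You instead carry out this computation directly: from $\ahat(J) = t\,\ahat(I) \geqslant \frac{t(\alpha(I)+N-2)}{N}$ and $\alpha(J) = \alpha(I^{(t)}) \leqslant t\,\alpha(I)$ you reduce to the bare inequality $t(N-2) \geqslant N-1$. Your route is more self-contained and makes the role of the hypothesis $t \geqslant 2$ completely transparent; the paper's route has the advantage of isolating a reusable lemma. Both handle $N=2$ the same way, by falling back on Chudnovsky's original $\PP^2$ result for the reduced ideal $I$. One cosmetic remark: your parenthetical wobble about the direction of the containment $I^t \subseteq I^{(t)}$ should be cleaned up in a final write-up --- the correct one-line justification is exactly what you landed on, that $I^t \subseteq I^{(t)}$ forces $\alpha(I^{(t)}) \leqslant \alpha(I^t) = t\,\alpha(I)$.
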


\begin{proof} Let $I$ be the defining ideal of $\{P_1, \dots, P_s\}$. By definition, $J = I^{(t)}$.
By \cite[Lemma 3.6]{FMX2018}, for $t \geqslant \max \left\{\dfrac{N-1}{N\epsilon}, m_0\right\}$, we have
$$\ahat(J) \geqslant \dfrac{\alpha(J)+N-1}{N},$$ 
where $\epsilon > 0$ is a constant satisfying $\ahat(I)= \dfrac{\alpha(I)}{N}+\epsilon$ and $m_0$ is the integer such that $(I^{(m)})^{(t)}=I^{(mt)}$ for all $t$ and $m\geqslant m_0$. Since $I$ is the defining ideal of points, we have $m_0=1$. Furthermore, by Theorem \ref{thm.weakChud}, $\epsilon \geqslant \dfrac{N-2}{N}$ (and $\epsilon \geqslant \dfrac{N-1}{N}$ if $N = 2$; see \cite{Chudnovsky1981}). Therefore,
$\max \left\{ \dfrac{N-1}{N \epsilon}, m_0 \right\} \leqslant \dfrac{(N-1)}{N-2} \leqslant 2. $
We get the desired result.
\end{proof}
%%%%%%%%%%%%%%%%%%%%%%%%%%%%%%%%%%%

\bibliographystyle{alpha}
\bibliography{References}

\end{document}